\numberwithin{equation}{section}
\newtheorem{Theorem}{Theorem}[section]
\newtheorem*{Theorem*}{Theorem}
\theoremstyle{definition}
\newtheorem{Remark}[Theorem]{Remark} }
\DeclareMathOperator{\lambdabar}{\kern.2em{\bar{}}\kern-.275em\lambda}
\newcommand{\D}{{\rm d}}
\DeclareMathOperator{\I}{Im}
\DeclareMathOperator{\R}{Re}
\DeclareMathOperator{\Li}{Li}
\DeclareMathOperator{\Span}{span}
\begin{document}

\allowdisplaybreaks

\renewcommand{\thefootnote}{}

\newcommand{\arXivNumber}{2306.04638}

\renewcommand{\PaperNumber}{074}

\FirstPageHeading

\ShortArticleName{Sun's Series via Cyclotomic Multiple Zeta Values}

\ArticleName{Sun's Series via Cyclotomic Multiple Zeta Values\footnote{This paper is a~contribution to the Special Issue on Asymptotics and Applications of Special Functions in Memory of Richard Paris. The~full collection is available at \href{https://www.emis.de/journals/SIGMA/Paris.html}{https://www.emis.de/journals/SIGMA/Paris.html}}}

\Author{Yajun ZHOU~$^{\rm ab}$}

\AuthorNameForHeading{Y.~Zhou}

\Address{$^{\rm a)}$~Program in Applied and Computational Mathematics (PACM), Princeton University,\\
\hphantom{$^{\rm a)}$}~Princeton, NJ 08544, USA}
\EmailD{\href{yajunz@math.princeton.edu}{yajunz@math.princeton.edu}}
\Address{$^{\rm b)}$~Academy of Advanced Interdisciplinary Studies (AAIS), Peking University,\\
\hphantom{$^{\rm b)}$}~Beijing 100871, P.R.~China}
\EmailD{\href{mailto:yajun.zhou.1982@pku.edu.cn}{yajun.zhou.1982@pku.edu.cn}}

\ArticleDates{Received June 13, 2023, in final form September 29, 2023; Published online October 12, 2023}

\Abstract{We prove and generalize several recent conjectures of Z.-W.~Sun surrounding binomial coefficients and harmonic numbers. We show that Sun's series and their analogs can be represented as cyclotomic multiple zeta values of levels $N\in\{4,8,12,16,24\} $, namely Goncharov's multiple polylogarithms evaluated at $N $-th roots of unity.}

\Keywords{Sun's series; binomial coefficients; harmonic numbers; cyclotomic multiple zeta values}

\Classification{11M32; 11B65}

\renewcommand{\thefootnote}{\arabic{footnote}}
\setcounter{footnote}{0}

\section{Introduction}

Amongst Zhi-Wei\ Sun's recent conjectures \cite[Section~2]{Sun2022} concerning binomial coefficients $ \binom mk:= \frac{m!}{k!(m-k)!}$ for $m\in\mathbb Z_{\geq0}$, $k\in\mathbb Z\cap[0,m]$ and harmonic numbers \smash{$ \mathsf H_m^{(r)}:= \sum_{k=1}^m\frac1{k^{r}}$} of order $ r\in\mathbb Z_{>0}$ for $ m\in\mathbb Z_{\geq0}$, one can find certain (${\mathbb Q}$-linear combinations of) convergent series in the following type:\begin{align}
\sum_{k=1}^\infty\frac{\binom{2k}k^n}{k^s}\left( \frac{t}{2^{2n}} \right)^{k}\left[ \prod_{j=1}^M\mathsf H_{k-\smash[t]{1}}^{(r_j)} \right]\left[\prod_{\vphantom{j}\smash[b]{j'}=1}^{M'}{{\mathsf H}}_{\smash[t]{2}k-1}^{( {r}'_{j'})}\right],\label{eq:binomEuler}
\end{align}
where $n\in\{-1,1\}$, $s\in\mathbb Z$, $ M,M'\in\mathbb Z_{\geq0}$, and $ |t|<1$.
Such infinite series were previously known as ``inverse binomial sums'' and ``binomial sums'' in some literature of high energy physics
\cite{Ablinger2017,ABRS2014,DavydychevKalmykov2001,DavydychevKalmykov2004, KalmykovVeretin2000,Kalmykov2007,
Weinzierl2004bn} that revolved around a certain class of Feynman diagrams. If one allows $n=0$ in \eqref{eq:binomEuler}, then one recovers some special cases of the ``nested harmonic sums'' studied by mathematical physicists \cite{ABS2011,ABS2013}.

In our recent work \cite[Section~3]{Zhou2022mkMpl}, we have analyzed \eqref{eq:binomEuler} for $n\in\{-1,0,1\}$, effectively reaching the following conclusion:
if $t$ is an algebraic number satisfying $ |t|<1$, then the infinite sum in \eqref{eq:binomEuler} can be represented as a $ \overline{\mathbb Q}(\pi)$-linear combination of Goncharov's multiple polylogarithms (MPLs) \cite{Goncharov1997,Goncharov1998}
\begin{align}
\Li_{a_1,\dots,a_n}(z_1,\dots,z_n):= \sum_{\ell_{1}>\dots>\ell_{n}>0}\prod_{j=1}^n\frac{z_{j}^{\ell_{j}}}{\ell_j^{a_j}},
\label{eq:Mpl_defn}\end{align}where $ a_1,\dots,a_n\in\mathbb Z_{>0}$ are positive integers and $ z_1,\dots,z_n\in\overline{\mathbb Q}$ are explicitly computable algebraic numbers. For meticulously chosen algebraic numbers $ t\in\overline{\mathbb Q}$, we can sometimes confine the arguments $ z_1,\dots,z_n$ of our MPLs to roots of unity, arriving at members of \begin{gather}
\mathfrak Z_{ k}(N):= \Span_{\mathbb Q}\biggl\{\Li_{a_1,\dots,a_n}(z_1,\dots,z_n)\in\mathbb C\Bigm| \nonumber\\
\hphantom{\mathfrak Z_{ k}(N):= \Span_{\mathbb Q}\biggl\{}{}
a_1,\dots,a_n\in\mathbb Z_{>0},\, z_{1}^{N}=\dots=z_n^N=1,\, \sum _{j=1}^{n}a_{j}=k  \biggr\},\label{eq:Zk(N)_defn}
\end{gather}the $ \mathbb Q$-vector space spanned by cyclotomic multiple zeta values (CMZVs) of weight $k\in\mathbb Z_{>0}$ and level $N\in\mathbb Z_{>0}$.
In \cite[Section~3]{Zhou2022mkMpl}, we have achieved our aforementioned goals by converting \eqref{eq:binomEuler} into certain $ \overline{\mathbb Q}(\pi)$-linear combinations of contour integrals \begin{align}
\int_C \Li_{a_1,\dots,a_n}(z_1(t),\dots,z_n(t))\frac{\D t}{R(t)}
\label{eq:MPL_int_repn}\end{align}for $ z_1(t),\dots,z_n(t),R(t)\in\overline{\mathbb Q}(t)$, and evaluating these integrals via fibrations of MPLs \cite[Lem\-ma~2.14 and Corollary~3.2]{Panzer2015}. Originating from the seminal works of Davydychev--Kalmykov~\cite{DavydychevKalmykov2001,DavydychevKalmykov2004}, the integral formulation \eqref{eq:MPL_int_repn} of the infinite series in \eqref{eq:binomEuler} was followed up by Weinzierl~\cite{Weinzierl2004bn}, Ablinger~\cite{Ablinger2017,Ablinger2022}, Au \cite{Au2020,Au2022a}, and Xu--Zhao \cite{XuZhao2022a,XuZhao2022b} among other researchers, prior to our study in~\cite[Section~3]{Zhou2022mkMpl}.

Special values of MPLs at algebraic arguments already cover a wide class of mathematical constants, such as $ \log2=\Li_1\big(\frac12\big)=-\Li_1(-1)$, $G=\frac{1}{2{\rm i}}\left[\Li_2(\rm i)-\Li_2(-\rm i)\right]$, and $ \zeta(3)=\Li_3(1)$. Furthermore, the versatility of MPLs allows us to compute sophisticated series whose exact values are hard to discover empirically. For instance, from \cite[Table 7]{Zhou2022mkMpl} one may read off\footnote{In what follows, we abbreviate $ \mathsf H_m^{(1)}$ into $ \mathsf H_m^{}$.}\begin{gather*}\sum_{k=0}^\infty\frac{\binom{2k}k}{(2k+1)^{3}}\frac{(-1)^k}{2^{4k}}={}\frac{25\R\Li_3\big({\rm e}^{2\pi {\rm i}/5}\big)}{12}+\frac{\zeta(3)}{2}\in\mathfrak Z_3(5),\\\sum_{k=0}^\infty\frac{\binom{2k}k\mathsf H_{2k+1}}{(2k+1)^{2}}\frac{(-1)^k}{2^{4k}}={}-5\R\Li_3\big({\rm e}^{2\pi {\rm i}/5}\big)+\frac{8\zeta(3)}{5}\in\mathfrak Z_3(5),\end{gather*}which combine into an identity \begin{align*}
\sum_{k=0}^\infty\frac{\binom{2k}{k}\big( 5\mathsf H_{2k+1} +\frac{12}{2k+1}\big)}{(2k+1)^2(-16)^k}=14\zeta(3)
\end{align*}found by Sun and proved by Charlton--Gangl--Lai--Xu--Zhao \cite[Section~4]{Charlton2022SunConj}.

In Section~\ref{sec:centr_binom} of this note, we extrapolate \cite[Corollary 3.8]{Zhou2022mkMpl} to the theorem below.
\begin{Theorem}[Sun's series involving $ \binom{2k}k$ and harmonic numbers]\label{thm:2k}
For $ r\in\mathbb Z_{>1}$, we have the following CMZV characterizations:
\begin{gather}
  \sum_{k=0}^\infty\frac{\binom{2k}k}{8^k}\mathsf H_k^{(r)}=  \ointctrclockwise_{|z|=1}\frac{\Li_r\left( \frac{(1+z)^{2}}{8z} \right)}{1-\frac{(1+z)^{2}}{8z}}\frac{\D z}{2\pi {\rm i}z}\in\sqrt{2}\mathfrak Z_{r}(8),\label{eq:2kZ8}\\
 \sum_{k=0}^\infty\frac{\binom{2k}k}{8^k}\mathsf H_{2k}^{(r)}= \ointctrclockwise_{|z|=1}\left[\frac{\Li_r\left( \frac{1+z^{2}}{2\sqrt{2}z} \right)}{1- \frac{1+z^{2}}{2\sqrt{2}z}}+\frac{\Li_r\left( -\frac{1+z^{2}}{2\sqrt{2}z} \right)}{1+ \frac{1+z^{2}}{2\sqrt{2}z}}\right]\frac{\D z}{4\pi {\rm i}z}\in\sqrt{2}\mathfrak Z_{r}(8),\label{eq:2kZ8'}\\
 \sum_{k=0}^\infty\frac{\binom{2k}k}{16^k}\mathsf H_k^{(r)}=  \ointctrclockwise_{|z|=1}\frac{\Li_r\left( \frac{(1+z)^{2}}{16z} \right)}{1-\frac{(1+z)^{2}}{16z}}\frac{\D z}{2\pi {\rm i}z}\in\sqrt{3}\mathfrak Z_{r}(12),\label{eq:2kZ12}\\
  \sum_{k=0}^\infty\frac{\binom{2k}k}{16^k}\mathsf H_{2k}^{(r)}= \ointctrclockwise_{|z|=1}\left[\frac{\Li_r\left( \frac{1+z^{2}}{4z} \right)}{1- \frac{1+z^{2}}{4z}}+\frac{\Li_r\Bigl( -\frac{1+z^{2}}{4z} \Bigr)}{1+ \frac{1+z^{2}}{4z}}\right]\frac{\D z}{4\pi {\rm i}z}\in\sqrt{3}\mathfrak Z_{r}(12).\label{eq:2kZ12'}
\end{gather} \end{Theorem}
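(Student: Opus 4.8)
The plan is to separate the statement into two tasks: deriving the four contour-integral representations from a single generating-function identity, and then placing the resulting integrals inside the advertised CMZV spaces by the fibration technology of \cite{Panzer2015}. For the representations I would start from the convolution
\[
\frac{\Li_r(w)}{1-w}=\biggl(\sum_{m\geq1}\frac{w^m}{m^r}\biggr)\biggl(\sum_{n\geq0}w^n\biggr)=\sum_{N\geq1}\mathsf H_N^{(r)}\,w^N,\qquad|w|<1.
\]
Setting $w=\frac{(1+z)^2}{8z}$ and writing $z=\mathrm e^{\mathrm i\theta}$ on the unit circle gives $w=\frac{\cos^2(\theta/2)}{2}\in[0,\tfrac12]$, so the series converges uniformly on $|z|=1$ and may be integrated term by term. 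Because
\[
\ointctrclockwise_{|z|=1}\biggl(\frac{(1+z)^2}{8z}\biggr)^{\!N}\frac{\D z}{2\pi\mathrm i z}=8^{-N}\binom{2N}{N},
\]
the right-hand side of \eqref{eq:2kZ8} collapses to $\sum_{N\geq1}8^{-N}\binom{2N}{N}\mathsf H_N^{(r)}$, which is its left-hand side since $\mathsf H_0^{(r)}=0$; identity \eqref{eq:2kZ12} is identical with $8$ replaced by $16$ (now $w\in[0,\tfrac14]$). For the two $\mathsf H_{2k}$ identities the new ingredient is an even-part projection: with $u=\frac{1+z^2}{2\sqrt2\,z}=\frac{\cos\theta}{\sqrt2}$, the bracket in \eqref{eq:2kZ8'} is
\[
\frac{\Li_r(u)}{1-u}+\frac{\Li_r(-u)}{1+u}=\sum_{N\geq1}\mathsf H_N^{(r)}\bigl(1+(-1)^N\bigr)u^{N}=2\sum_{m\geq1}\mathsf H_{2m}^{(r)}u^{2m},
\]
and since $u^{2m}=8^{-m}(1+z^2)^{2m}z^{-2m}$ integrates to $8^{-m}\binom{2m}{m}$ against $\frac{\D z}{2\pi\mathrm i z}$, the factor $\tfrac12$ hidden in the $4\pi\mathrm i$ measure reproduces $\sum_{m\geq1}8^{-m}\binom{2m}{m}\mathsf H_{2m}^{(r)}$; the identity \eqref{eq:2kZ12'} follows verbatim with $u=\frac{1+z^2}{4z}$.

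For the CMZV membership I would make the quadratic irrationalities explicit already in the rational one-form. For \eqref{eq:2kZ8} one has $1-\frac{(1+z)^2}{8z}=-\frac{(z-\alpha)(z-\beta)}{8z}$ with $\alpha=3-2\sqrt2=(\sqrt2-1)^2$, $\beta=3+2\sqrt2$, $\alpha\beta=1$, so that
\[
\frac{1}{1-\frac{(1+z)^2}{8z}}\,\frac{\D z}{z}=\sqrt2\,\biggl(\frac{\D z}{z-\alpha}-\frac{\D z}{z-\beta}\biggr),
\]
while the counterpart of \eqref{eq:2kZ12} produces the coefficient $\frac{2}{\sqrt3}\in\sqrt3\,\mathbb Q$ together with $7\mp4\sqrt3=(2\mp\sqrt3)^2$. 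I would then expand $\Li_r(w(z))$ as an iterated integral in $z$, whose letters come from the preimages of $\{0,1,\infty\}$ under $w$, namely $\{-1,\alpha,\beta,0\}$ for \eqref{eq:2kZ8} (respectively $\{0,\pm\mathrm i,\pm(\sqrt2\pm1)\}$ for \eqref{eq:2kZ8'}). The integrand is thus a weight-$r$ hyperlogarithm in $z$ against a rational one-form, and the functional $\frac{1}{2\pi\mathrm i}\ointctrclockwise$ preserves the weight; applying the fibration of \cite[Lemma~2.14 and Corollary~3.2]{Panzer2015} to carry out the $z$-integration expresses each integral as a $\overline{\mathbb Q}$-linear combination of weight-$r$ MPLs evaluated at these algebraic points, carrying the isolated $\sqrt2$ (resp.\ $\sqrt3$) prefactor.

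The crux, which I expect to be the main obstacle, is to prove that this combination lies in the specific space $\mathfrak Z_r(8)$ (resp.\ $\mathfrak Z_r(12)$) rather than in a span of higher level or of merely algebraic arguments. Here the structure extrapolated from \cite[Corollary~3.8]{Zhou2022mkMpl} is essential: the identities $\sqrt2-1=\tan\frac\pi8$ and $2-\sqrt3=\tan\frac\pi{12}$ exhibit $\alpha,\beta$ and $7\pm4\sqrt3$ as expressions in $\zeta_8$ and $\zeta_{12}$, and the shuffle, stuffle, and distribution/Hölder functional equations among MPLs at roots of unity must be marshalled to rewrite every hyperlogarithm in the above alphabets in the level-$8$ (resp.\ level-$12$) CMZV basis. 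Performing this reduction uniformly in the order $r$, and verifying that no constant of genuinely higher level survives, is the delicate bookkeeping; once it is complete, the memberships $\in\sqrt2\,\mathfrak Z_r(8)$ and $\in\sqrt3\,\mathfrak Z_r(12)$ drop out, the coefficient $\frac{2}{\sqrt3}=\frac23\sqrt3$ being absorbed into the $\mathbb Q$-linearity of $\mathfrak Z_r(12)$.
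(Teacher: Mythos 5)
Your derivation of the four integral representations is correct and coincides with the paper's: both rest on the generating function $\sum_{k\geq1}w^k\mathsf H_k^{(r)}=\Li_r(w)/(1-w)$ and its even part $\Li_r(w)/(1-w)+\Li_r(-w)/(1+w)=2\sum_{m\geq1}\mathsf H_{2m}^{(r)}w^{2m}$, combined with $\ointctrclockwise_{|z|=1}\bigl[(1+z)^2/z\bigr]^k\frac{\D z}{2\pi {\rm i}z}=\binom{2k}k$ and termwise integration. Your partial-fraction computation producing the prefactors $\sqrt2$ and $\frac{2}{\sqrt3}$ is exactly the paper's $\frac{1+\chi}{1-\chi}$ with $\chi=\big(\sqrt2-1\big)^2$, $\big(2-\sqrt3\big)^2$ \big(and $\frac{1+\upsilon^2}{1-\upsilon^2}$ with $\upsilon=\sqrt2-1$, $2-\sqrt3$\big).

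The gap is that the membership claims --- the actual content of the theorem --- are not proved. You correctly identify the reduction to $\mathfrak Z_r(8)$ and $\mathfrak Z_r(12)$ as ``the crux'' and ``the main obstacle,'' but then dispose of it as ``delicate bookkeeping'' to be marshalled from shuffle/stuffle/H\"older relations, without exhibiting the mechanism or arguing that it works uniformly in $r$. The paper closes this in two concrete steps. First, the branch-cut analysis of \cite[Corollary~3.8]{Zhou2022mkMpl} shrinks the contour onto the branch cut of $\Li_r$ --- this is where the hypothesis $r>1$ enters, since the jump $\frac{2\pi {\rm i}}{(r-1)!}\log^{r-1}x$ vanishes as $x\to1^+$ and the shrunk contour is therefore pole-free, a point your proposal never addresses --- and lands the integral in the explicit space $\mathfrak H_r^{(\chi)}(2)$ \big(resp.\ $\mathfrak H_r^{(\upsilon)}(4)$\big) spanned by $(\pi {\rm i})^{r-\ell}G(\alpha_1,\dots,\alpha_\ell;\chi)$ with alphabet $\alpha_j^2\in\{0,1\}$ \big(resp.\ $\alpha_j^4\in\{0,1\}$\big). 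Second, the inclusions $\mathfrak H_r^{(\chi)}(2)\subset\mathfrak Z_r(8)$ or $\mathfrak Z_r(12)$ are certified by Au's \texttt{IterIntDoableQ} \cite{Au2022a}, which tests whether the singularity set $\{0,\pm1,\chi\}$ can be transported into $\{0\}$ together with the $N$-th roots of unity. Without either that certified reduction or an explicit change of variables sending your alphabet $\big\{0,-1,3\pm2\sqrt2\big\}$ into the eighth roots of unity, the assertions $\in\sqrt2\,\mathfrak Z_r(8)$ and $\in\sqrt3\,\mathfrak Z_r(12)$ remain unestablished; noting that $\sqrt2-1=\tan\frac\pi8$ is suggestive but is not by itself a proof.
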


\begin{Remark}\label{tab:lin_binom_Sun}
 For $ r=2$ (resp.\ $ r=3$), one may reorganize the formulas below into series involving $\mathsf H_{2k}^{(r)}-\frac1{2^{r}}\mathsf H_k^{(r)} $ in \cite[Theorem 1.1]{Sun2022} (resp.\ \cite[Conjectures~2.2 and~2.3]{Sun2022}):
\begin{gather*}
\sum_{k=0}^\infty\frac{\binom{2k}k}{8^k}\mathsf H_k^{(2)}=   -12\sqrt{2}\Li_2\big(\sqrt{2}-1\big)-4\sqrt{2}\widetilde{\lambda}^2+\frac{11 \pi ^2}{6 \sqrt{2}}\in\sqrt{2}\mathfrak Z_{2}(8),\\
 \sum_{k=0}^\infty\frac{\binom{2k}k}{8^k}\mathsf H_k^{(3)}=  -\frac{128\sqrt{2}}{3}\R \Li_3\big({\rm e}^{\pi {\rm i}/4}\big)+72\sqrt{2}\Li_3\biggl( \frac{1}{\sqrt{2}} \biggr)-48\sqrt{2}\Li_3\big(\sqrt{2}-1\big)\\
\hphantom{\sum_{k=0}^\infty\frac{\binom{2k}k}{8^k}\mathsf H_k^{(2)}=}{}
-\frac{39\zeta(3)}{4\sqrt{2}} +12\sqrt{2}\big(3\lambda-2\widetilde \lambda\big)\Li_2\big(\sqrt{2}-1\big)\\
\hphantom{\sum_{k=0}^\infty\frac{\binom{2k}k}{8^k}\mathsf H_k^{(2)}=}{}
+\frac{9 \big(\pi ^{2}-\lambda^{2}\big ) \lambda- \big[12 \widetilde{\lambda } \big(2 \widetilde{\lambda }-9 \lambda \big)+54 \lambda ^2+11 \pi ^2\big]\widetilde{\lambda }}{3 \sqrt{2}}\in\sqrt{2}\mathfrak Z_{3}(8),\\
\sum_{k=0}^\infty\frac{\binom{2k}k}{8^k}\mathsf H_{2k}^{(2)}= \frac{\pi^{2}}{16\sqrt{2}}+\frac{1}{4}\sum_{k=0}^\infty\frac{\binom{2k}k}{8^k}\mathsf H_k^{(2)}\in\sqrt{2}\mathfrak Z_{2}(8),\\
\sum_{k=0}^\infty\frac{\binom{2k}k}{8^k}\mathsf H_{2k}^{(3)}= \frac{35\sqrt2\zeta(3)}{64}-\frac{\sqrt2\pi G}8+\frac{1}{8}\sum_{k=0}^\infty\frac{\binom{2k}k}{8^k}\mathsf H_k^{(3)}\in\sqrt{2}\mathfrak Z_{3}(8),\\
\sum_{k=0}^\infty\frac{\binom{2k}k}{16^k}\mathsf H_k^{(2)}= -\frac{32\Li_2\big(2-\sqrt{3}\big)}{\sqrt{3}}-\frac{4\widetilde{\varLambda}^2}{\sqrt{3}}+\frac{5 \pi ^2}{3 \sqrt{3}}\in\sqrt{3}\mathfrak Z_{2}(12),\\
\sum_{k=0}^\infty\frac{\binom{2k}k}{16^k}\mathsf H_{k}^{(3)}= -\frac{288}{\sqrt{3}}\R \Li_3\big({\rm e}^{\pi {\rm i}/6}\big)-\frac{256\Li_3\big(2-\sqrt{3}\big)}{\sqrt{3}}-\frac{64}{\sqrt{3}}\Li_3\biggl(1-\frac{\sqrt{3}}{2}\biggr)\\
\hphantom{\sum_{k=0}^\infty\frac{\binom{2k}k}{16^k}\mathsf H_{k}^{(3)}=}{}
+\frac{256}{\sqrt{3}}\Li_3\biggl(\frac{\sqrt{3}-1}{2}\biggr)+76\sqrt{3}\zeta(3)+\frac{64\big(2\lambda-\widetilde \varLambda\big)\Li_2\big(2-\sqrt{3}\big)}{\sqrt{3}}\\
\hphantom{\sum_{k=0}^\infty\frac{\binom{2k}k}{16^k}\mathsf H_{k}^{(3)}=}{}
+\frac{2 \big[24 \big( \widetilde{\varLambda }- \lambda\big)\lambda\widetilde{\varLambda }-5 \pi^2\widetilde{\varLambda }+8 \lambda ^3\big]}{3 \sqrt{3}}\in\sqrt{3}\mathfrak Z_{3}(12),\\
\sum_{k=0}^\infty\frac{\binom{2k}k}{16^k}\mathsf H_{2k}^{(2)}= \frac{\pi^{2}}{36\sqrt{3}}+\frac{1}{4}\sum_{k=0}^\infty\frac{\binom{2k}k}{16^k}\mathsf H_k^{(2)}\in\sqrt{3}\mathfrak Z_{2}(12)\vphantom{\frac{\frac{\int1}{1}}{1}},\\
\sum_{k=0}^\infty\frac{\binom{2k}k}{16^k}\mathsf H_{2k}^{(3)}= \frac{2\zeta(3)}{3\sqrt3}-\frac{\pi\I\Li_2\big({\rm e}^{2\pi {\rm i}/3}\big)}{4\sqrt{3}}+\frac{1}{8}\sum_{k=0}^\infty\frac{\binom{2k}k}{16^k}\mathsf H_k^{(3)}\in\sqrt{3}\mathfrak Z_{3}(12),
\end{gather*}
where
\[
\lambda:= \log2 , \qquad \widetilde \lambda:= \log\big(1+\sqrt{2}\big) , \qquad \widetilde{\varLambda}:= \log\big(2+\sqrt{3}\big).
\]
\end{Remark}

In Section~\ref{sec:non-centr_binom}, we adapt our recent work \cite[Corollary 3.8]{Zhou2022mkMpl} to scenarios involving the binomial coefficients $ \binom{3k}k$ and $ \binom{4k}{2k}$, thereby accommodating to some of Sun's new conjectures \cite[Section~2]{Sun2022} announced on June 5, 2023. Our results are summarized in the next two theorems.\looseness=1

\pagebreak

\begin{Theorem}[CMZVs in Sun's series involving $ \binom{3k}k$, $ \binom{4k}{2k}$ and harmonic numbers] \label{thm:3k4k}\quad
 \begin{enumerate}\itemsep=0pt
 \item[$(a)$] We have
\begin{align}
 &\sum_{k=1}^\infty\frac{1}{k^{s}2^k\binom{3k}k}=\begin{cases}
\displaystyle \int_0^1\frac{\frac{t(1-t)^2}{2}}{1-\frac{t(1-t)^2}{2}}\frac{\D t}{t} =\frac{\pi}{10}-\frac{\log 2}{5},& s=1, \vspace{2mm}\\
\displaystyle \int_0^1\Li_{s-1}\left( \frac{t(1-t)^2}{2} \right)\frac{\D t}{t}\in\mathfrak Z_{s}(4), & s\in\mathbb Z_{>1} ,
\end{cases}\label{eq:3kZ4}\\
 &\sum_{k=1}^\infty\frac{\mathsf H_{3k}-\mathsf H_k+\frac{1}{k}}{k^{s}2^k\binom{3k}k}
 =\begin{cases}
\displaystyle -\int_0^1\frac{\frac{t(1-t)^2}{2}\log t}{1-\frac{t(1-t)^2}{2}}\frac{\D t}{t} =\frac{2G}{5}-\frac{\log^{2}2}{10},& s=1, \vspace{1mm}\\
\displaystyle -\int_0^1\Li_{s-1}\left( \frac{t(1-t)^2}{2} \right)\log t\frac{\D t}{t}\in\mathfrak Z_{s+1}(4), & s\in\mathbb Z_{>1} ,
\end{cases}\label{eq:3kZ4'}\\
 & \sum_{k=1}^\infty\frac{\mathsf H_{3k}-\mathsf H_{2k}}{k^{s}2^k\binom{3k}k}=
 \begin{cases}\displaystyle-\int_0^1\frac{\frac{t(1-t)^2}{2}\log(1- t)}{1-\frac{t(1-t)^2}{2}}\frac{\D t}{t} &\vspace{1mm} \\
\displaystyle \qquad =\frac{2G}{5}+\frac{\log ^22}{10}-\frac{\pi \log2}{20} -\frac{\pi ^2}{40},& s=1, \vspace{1mm}\\
\displaystyle-\int_0^1\Li_{s-1}\left( \frac{t(1-t)^2}{2} \right)\log(1- t)\frac{\D t}{t}\in\mathfrak Z_{s+1}(4), & s\in\mathbb Z_{>1} .
\end{cases}\!\!\!\!\!\label{eq:3kZ4''}
\end{align}
\item[$(b)$] For $ r\in\mathbb Z_{>0}$, the following formulas hold true:
\begin{gather}
\sum_{k=0}^\infty\frac{2^k\binom{3k}k}{3^{3k}}\mathsf H_k^{(r)}=\frac{3\sqrt{3}}{2\pi}\int_0^\infty\frac{\Li_r\big( \frac{2X^{3}}{(1+X^3)^{2}} \big)}{1-\frac{2X^{3}}{(1+X^3)^{2}}}\frac{\D X}{1+X^3}\in\mathfrak Z_r(12)+\sqrt{3}\mathfrak Z_{r}(12),\label{eq:3kHkZr}\\
\sum_{k=0}^\infty\frac{2^k\binom{3k}k}{3^{3k}}\mathsf H_{2k}^{(r)}=\frac{3\sqrt{3}}{2\pi}\int_0^\infty\left[\frac{\Li_r\left( \frac{\sqrt{2}x^{3}}{1+x^6} \right)}{1-\frac{\sqrt{2}x^{3}}{1+x^6}}+\frac{\Li_r\left(- \frac{\sqrt{2}x^{3}}{1+x^6} \right)}{1+\frac{\sqrt{2}x^{3}}{1+x^6}}\right]\frac{x\D x}{1+x^6}\nonumber\\
\hphantom{\sum_{k=0}^\infty\frac{2^k\binom{3k}k}{3^{3k}}\mathsf H_{2k}^{(r)}=}{}
\in\mathfrak Z_r(24)+\sqrt{3}\mathfrak Z_{r}(24),\label{eq:3kH2kZr}\\
\sum_{k=0}^\infty\frac{2^k\binom{3k}k}{3^{3k}}\big(
3\mathsf H_{3 k}^{}-2\mathsf H_{2 k}^{}-\mathsf H_k^{}-3 \log 3\big)\mathsf H_k^{(r)}\nonumber\\
\qquad=\frac{3\sqrt{3}}{2\pi}\int_0^\infty\frac{\Li_r\big( \frac{2X^{3}}{(1+X^3)^{2}} \big)}{1-\frac{2X^{3}}{(1+X^3)^{2}}}\frac{\log\frac{X^3}{(1+X^3)^{2}}\D X}{1+X^3}\in\mathfrak Z_{r+1}(12)+\sqrt{3}\mathfrak Z_{r+1}(12).\label{eq:3kH3kZr}
\end{gather}
In addition, one has
\begin{gather}
\sum_{k=0}
^\infty\frac{2^k\binom{3k}k}{3^{3k}}\big(
3\mathsf H_{3 k}-2\mathsf H_{2 k}-\mathsf H_k-3 \log 3\big)=\frac{3\sqrt{3}}{2\pi}\int_0^\infty\frac{\log\frac{X^3}{(1+X^3)^{2}}}{1-\frac{2X^{3}}{(1+X^3)^{2}}}\frac{\D X}{1+X^3}\nonumber\\
\qquad=-\frac{3+\sqrt{3}}{2}\log2-\sqrt{3}\log\big(2+\sqrt3\big)\in \mathfrak Z_1(12)+\sqrt{3}\mathfrak Z_{1}(12).\label{eq:3kH3kZ1}
\end{gather}
\item[$(c)$] For $r\in\mathbb Z_{>0}$, we have
\begin{gather}
\sum_{k=0}^\infty\frac{\binom{4k}{2k}}{2^{5k}}\mathsf H_k^{(r)}= \frac{2\sqrt{2}}{\pi}\int_0^\infty\frac{\Li_r\big( \frac{2X^{4}}{(1+X^4)^{2}} \big)}{1-\frac{2X^{4}}{(1+X^4)^{2}}}\frac{X^{2}\D X}{1+X^4}\nonumber\\
\hphantom{\sum_{k=0}^\infty\frac{\binom{4k}{2k}}{2^{5k}}\mathsf H_k^{(r)}=}{}
\in
 \sqrt{1+\frac{1}{\sqrt{2}}}\mathfrak Z_{r}(16)+\sqrt{1-\frac{1}{\sqrt{2}}}\mathfrak Z_{r}(16),\label{eq:4kZr16}\\
\sum_{k=0}^\infty\frac{3^k\binom{4k}{2k}}{2^{6k}}\mathsf H_k^{(r)}= \frac{2\sqrt{2}}{\pi}\int_0^\infty\frac{\Li_r\bigl( \frac{3X^{4}}{(1+X^4)^{2}} \bigr)}{1-\frac{3X^{4}}{(1+X^4)^{2}}}\frac{X^{2}\D X}{1+X^4} \in \mathfrak Z_r(24)+\sqrt{3}\mathfrak Z_{r}(24).
\label{eq:4kZr24}
\end{gather}
\end{enumerate}
\end{Theorem}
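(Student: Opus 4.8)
The plan is to reduce every identity to the template already established in \cite[Corollary 3.8]{Zhou2022mkMpl}: first convert each series into a one-dimensional integral of a weight-$r$ polylogarithm against an algebraic kernel, and then invoke the fibration-of-MPLs machinery \cite[Lemma 2.14 and Corollary 3.2]{Panzer2015} to land in the advertised CMZV space. The conversion rests on two elementary facts: the Euler-type generating identity $\sum_{k\geq 0}\mathsf H_k^{(r)}u^k=\frac{\Li_r(u)}{1-u}$, together with its even-index refinement $\sum_{k\geq 0}\mathsf H_{2k}^{(r)}v^{2k}=\frac12\big[\frac{\Li_r(v)}{1-v}+\frac{\Li_r(-v)}{1+v}\big]$; and Beta/Gamma integral representations of the binomial coefficients.

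For part $(a)$ I would write $\frac{1}{k\binom{3k}{k}}=B(k,2k+1)=\int_0^1 t^{k-1}(1-t)^{2k}\,\D t$, so that $\frac{1}{k^s2^k\binom{3k}{k}}=\int_0^1 k^{-(s-1)}\big(\frac{t(1-t)^2}{2}\big)^k\frac{\D t}{t}$; summing over $k$ produces $\int_0^1\Li_{s-1}\big(\frac{t(1-t)^2}{2}\big)\frac{\D t}{t}$. The two harmonic-number variants follow by differentiating the Beta integral in its parameters before summing: since $\psi(k)-\psi(3k+1)=-(\mathsf H_{3k}-\mathsf H_k+\frac1k)$ and $\psi(2k+1)-\psi(3k+1)=-(\mathsf H_{3k}-\mathsf H_{2k})$, inserting $-\log t$ (resp.\ $-\log(1-t)$) against the same kernel reproduces the numerators in \eqref{eq:3kZ4'} and \eqref{eq:3kZ4''}. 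The CMZV membership then hinges on factoring the kernel denominator $2-t(1-t)^2=(2-t)(t^2+1)$, whose roots $\{2,\pm{\rm i}\}$ are fourth roots of unity together with one rational point, pinning the level at $N=4$; the weight rises by one in the $\log$-weighted cases, explaining $\mathfrak Z_{s+1}(4)$.

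For parts $(b)$ and $(c)$ the binomials sit in the numerator, so I would instead use the Gauss multiplication formula to produce \emph{infinite}-interval Beta integrals. The triplication identity $\Gamma(k+\frac13)\Gamma(k+\frac23)=\frac{2\pi}{\sqrt3\,3^{3k}}\frac{\Gamma(3k+1)}{\Gamma(k+1)}$ converts $\frac{2^k\binom{3k}k}{3^{3k}}$ into $\frac{3\sqrt3}{2\pi}\int_0^\infty\big(\frac{2X^3}{(1+X^3)^2}\big)^k\frac{\D X}{1+X^3}$ after the substitution $u=X^3$, and the duplication formula does the same for $\binom{4k}{2k}$ with kernel $\frac{2X^4}{(1+X^4)^2}$ and measure $\frac{X^2\,\D X}{1+X^4}$. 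Summing against $\mathsf H_k^{(r)}$ (resp.\ $\mathsf H_{2k}^{(r)}$ after $X=x^2$) yields \eqref{eq:3kHkZr}--\eqref{eq:3kH2kZr} and \eqref{eq:4kZr16}--\eqref{eq:4kZr24}; the combination $3\mathsf H_{3k}-2\mathsf H_{2k}-\mathsf H_k-3\log3$ in \eqref{eq:3kH3kZr}--\eqref{eq:3kH3kZ1} is exactly $\frac{\D}{\D k}\log\frac{2^k(3k)!}{3^{3k}k!(2k)!}-\log2$, and thus arises by differentiating the $X$-integral in $k$, which brings down the logarithmic factor $\log\frac{X^3}{(1+X^3)^2}$.

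The main obstacle, and the heart of the argument, is the level and coefficient-field bookkeeping in the final step. I would fix the level by solving $1-(\text{kernel})=0$: one finds $X^6+1=0$ for \eqref{eq:3kHkZr} ($X$ a twelfth root of unity, giving $N=12$), $X^{12}+1=0$ after $X=x^2$ for \eqref{eq:3kH2kZr} (giving $N=24$), $X^8+1=0$ for \eqref{eq:4kZr16} (sixteenth roots, $N=16$), and the cyclotomic polynomial $X^8-X^4+1=\Phi_{24}(X)=0$ for \eqref{eq:4kZr24} (giving $N=24$). The delicate points are (i) checking that the poles of the measures $\frac{1}{1+X^3}$, $\frac{X^2}{1+X^4}$ at the lower-order roots of unity do not raise the level beyond the claimed $N$, and (ii) tracking how the prefactor $\frac{3\sqrt3}{2\pi}$ (resp.\ $\frac{2\sqrt2}{\pi}$) interacts with the $\pi$ produced by the fibration and with the quadratic surds at the branch points---this is what fixes the exact coefficient modules $\mathfrak Z_r(N)+\sqrt3\,\mathfrak Z_r(N)$ and $\sqrt{1\pm\frac1{\sqrt2}}\,\mathfrak Z_r(16)$. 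Once the branch points are identified as $N$-th roots of unity, fibrating the iterated integral one variable at a time \cite[Lemma 2.14 and Corollary 3.2]{Panzer2015} expresses each integral as a $\overline{\mathbb Q}(\pi)$-linear combination of MPLs at $N$-th roots of unity, and collecting the surd scalars delivers the stated memberships. The closed forms in the $s=1$ cases of part $(a)$ and in \eqref{eq:3kH3kZ1} are then obtained by direct weight-one evaluation, where the rational integrands reduce to elementary logarithms and arctangents.
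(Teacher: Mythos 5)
Your series-to-integral conversions (Beta integrals for $\binom{3k}{k}^{-1}$, the Gauss triplication/duplication formulas for $\binom{3k}{k}$ and $\binom{4k}{2k}$, differentiation in the Beta parameters to produce the harmonic-number weights, and the generating functions for $\mathsf H_k^{(r)}$, $\mathsf H_{2k}^{(r)}$) match the paper's, and your identification of the cyclotomic loci $1+X^6$, $1+X^8$, $\Phi_{24}(X)$ is correct. The gap is in the final step for parts (b) and (c). You propose to ``fibrate the iterated integral one variable at a time'' over $[0,\infty)$ and then absorb the prefactor. But Panzer's fibration of an integral over the half-line yields a $\overline{\mathbb Q}(\pi)$-linear combination of CMZVs of weight $r$; multiplying by $\frac{3\sqrt3}{2\pi}$ (resp.\ $\frac{2\sqrt2}{\pi}$) then leaves you in $\frac1\pi\mathfrak Z_r(12)$ rather than $\mathfrak Z_r(12)+\sqrt3\,\mathfrak Z_r(12)$, and $\frac1\pi$ does not belong to any $\mathfrak Z_k(N)$. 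You flag this interaction as a ``delicate point'' but supply no mechanism to resolve it. The paper's mechanism is essential and absent from your proposal: exploit the $\mathbb Z/3$ (resp.\ $\mathbb Z/4$) rotational symmetry and the inversion $X\mapsto1/X$ to fold $\int_0^\infty$ into a closed contour against a rational kernel $R_{12}(z)$ (resp.\ $R_{16}$, $R_{24}^{\pm}$) whose only poles are simple, sit at roots of unity, and have residues in $\mathbb Q+\mathbb Q\sqrt3$ (resp.\ $\mathbb Q\sqrt{1\pm\frac1{\sqrt2}}$); then shrink the contour onto the branch cut of $\Li_r$, which is an arc of the unit circle between two roots of unity. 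The jump discontinuity $\Li_r(x+{\rm i}0^+)-\Li_r(x-{\rm i}0^+)=\frac{2\pi{\rm i}}{(r-1)!}\log^{r-1}x$ is what produces the factor $2\pi{\rm i}$ cancelling the prefactor's $\frac1{2\pi}$, handles the endpoint at infinity, and leaves GPLs anchored at roots of unity via the reformulation \eqref{eq:Zk(N)_defn''}. Without this (or an equivalent device), the claimed memberships do not follow.

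A smaller but genuine gap occurs in part (a): the zero $t=2$ of $2-t(1-t)^2=(2-t)\bigl(t^2+1\bigr)$ is \emph{not} a fourth root of unity, so the resulting GPLs $G(0,\beta_1,\dots,\beta_{r-1},2;1)$ are not level-$4$ CMZVs by mere inspection of the branch points. The paper disposes of them through the reflection $t\mapsto1-t$ (a H\"older relation), giving $G(0,\beta_1,\dots,\beta_{r-1},2;1)=(-1)^{r+1}G(-1,1-\beta_{r-1},\dots,1-\beta_1,1;1)\in\mathfrak Z_{r+1}(2)\subseteq\mathfrak Z_{r+1}(4)$. Your assertion that the root set $\{2,\pm{\rm i}\}$ ``pins the level at $N=4$'' skips this step. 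You should also note that the weight-one ($r=1$) cases of (b) and (c) need separate treatment, since the $\Li_1$ jump does not vanish at the edge of the branch cut; the paper defers these to its Section~3.3.
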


\begin{Remark}Part (a) of the theorem above uses ideas from Kam Cheong Au (see \cite[Example~4.21]{Au2020} and \cite[Proposition 7.13]{Au2022a}). Some formulas below
were previously reported by Au, while the rest 
prove and generalize Sun's experimental identities \cite[formulas~(2.6) and~(2.7) in Conjecture~2.4]{Sun2022}:
\begin{gather*}
\sum_{k=1}^\infty\frac{1}{k^{2}2^k\binom{3k}k}=  \frac{\pi ^2}{24}-\frac{\lambda^{2}}{2}\in\mathfrak Z_2(2)\subset\mathfrak Z_2(4),\\
\sum_{k=1}^\infty\frac{1}{k^{3}2^k\binom{3k}k}=  -\frac{33 \zeta (3)}{16}+\pi G+\frac{\lambda ^3}{6}-\frac{\pi ^2 \lambda}{24}\in\mathfrak Z_3(4),\\
\sum_{k=1}^\infty\frac{1}{k^{4}2^k\binom{3k}k}=  -\frac{21}{2} \Li_4\left(\frac{1}{2}\right)+2 \pi \I\Li_3\left(\frac{1+{\rm i}}{2}\right)\\
\hphantom{\sum_{k=1}^\infty\frac{1}{k^{4}2^k\binom{3k}k}=}{}
-\frac{57\zeta (3) \lambda}{8} -\frac{23 \lambda ^4}{48} +\frac{19\pi ^2\lambda^2}{48} +\frac{61 \pi ^4}{960}\in\mathfrak Z_4(4),\\
\sum_{k=1}^\infty\frac{\mathsf H_{3k}-\mathsf H_k}{k2^k\binom{3k}k}= \frac{2G}{5}+\frac{2\lambda^{2}}{5}-\frac{\pi ^2}{24}\in\mathfrak Z_2(2)+{\rm i}\mathfrak Z_2(4),\\
 \sum_{k=1}^\infty\frac{\mathsf H_{3k}-\mathsf H_k}{k^{2}2^k\binom{3k}k}= \frac{11 \zeta (3)}{4}-\pi G-\frac{\pi ^2 \lambda}{24} \in\mathfrak Z_3(4),\\ \sum_{k=1}^\infty\frac{\mathsf H_{3k}-\mathsf H_k}{k^{3}2^k\binom{3k}k}= \frac{27}{2} \Li_4\left(\frac{1}{2}\right)-2 \pi\I\Li_3\left(\frac{1+{\rm i}}{2}\right)\\
 \hphantom{\sum_{k=1}^\infty\frac{\mathsf H_{3k}-\mathsf H_k}{k^{3}2^k\binom{3k}k}=}{}
 +\frac{145\zeta (3) \lambda}{16} +2G^2+\frac{9 \lambda^{4}}{16}-\frac{23\pi ^2 \lambda^{2}}{48}-\frac{17 \pi ^4}{160}\in\mathfrak Z_4(4), \\
\sum_{k=1}^\infty\frac{\mathsf H_{2k}-\mathsf H_k}{k2^k\binom{3k}k}= \frac{3\lambda^{2}}{10}+\frac{\pi \lambda}{20} -\frac{\pi ^2}{60}\in\mathfrak Z_2(2)+{\rm i}\mathfrak Z_2(4),\\
\sum_{k=1}^\infty\frac{\mathsf H_{2k}-\mathsf H_k}{k^{2}2^k\binom{3k}k}= \frac{33 \zeta (3)}{32}-\frac{\pi G}{2}+\frac{\pi ^2 \lambda}{24} \in\mathfrak Z_3(4),\\
\sum_{k=1}^\infty\frac{\mathsf H_{2k}-\mathsf H_k}{k^{3}2^k\binom{3k}k}= \frac{9}{2} \Li_4\left(\frac{1}{2}\right)+\frac{93\zeta (3) \lambda}{32} +\frac{\pi G\lambda}{2}+\frac{3 \lambda^{4}}{16}-\frac{5\pi ^2 \lambda^{2}}{24} -\frac{31 \pi ^4}{640}\in\mathfrak Z_4(4),
\end{gather*}
where $\lambda:= \log2 $ and $ G:= \I\Li_2({\rm i})$.

It is possible to extend the statements in part (a) to $ s\in\mathbb Z_{\leq0}$, but the resulting patterns are not as neat as the $ s\in\mathbb Z_{>1}$ cases. For example, we have
 \begin{align*}\begin{split}
\sum_{k=1}^\infty\frac{\mathsf H_{3k}-8\mathsf H_{2k}+7\mathsf H_k}{2^k\binom{3k}k}={}&\int_0^1  x\frac{\D }{\D x}\frac{x}{1-x}\bigg|_{x=\frac{t(1-t)^2}{2}}\frac{\log\frac{t^7}{(1-t)^8}}{t}\D t+7\int_0^1\frac{\frac{t(1-t)^2}{2}}{1-\frac{t(1-t)^2}{2}}\frac{\D t}{t}\\={}&\frac{22G}{125}-\frac{2 \log ^22}{25}-\frac{22\pi \log 2}{125}-\frac{17 \pi ^2}{1500}-\frac{33 \log 2}{25}+\frac{11 \pi }{50} \end{split}
\end{align*}
and
\begin{align*}\sum_{k=1}^\infty\frac{\mathsf H_{3k}-8\mathsf H_{2k}+7\mathsf H_k}{2^k\binom{3k}k}k={}&\int_0^1  \left(x\frac{\D }{\D x}\right)^2\frac{x}{1-x}\bigg|_{x=\frac{t(1-t)^2}{2}}\frac{\log\frac{t^7}{(1-t)^8}}{t}\D t\\
&{}+7\int_0^1 x\frac{\D }{\D x}\frac{x}{1-x}\bigg|_{x=\frac{t(1-t)^2}{2}}\frac{\D t}{t}\\={}&\frac{316G}{3125}-\frac{6 \log ^22}{625}-\frac{316 \pi \log 2}{3125}-\frac{17 \pi ^2}{12500}-\frac{549 \log 2}{625}+\frac{33 \pi }{1250} .
\end{align*}
These two equations conjoin into a succinct formula\begin{align*}
\sum_{k=1}^\infty\frac{\mathsf H_{3k}-8\mathsf H_{2k}+7\mathsf H_k}{2^k\binom{3k}k}(25k-3)=2G-2 (\pi+9 ) \log 2
\end{align*} that has been discovered recently by Sun \cite[formula (2.10) in Conjecture 2.5]{Sun2022}. \end{Remark}

\begin{Remark}\label{tab:3kSun'}
Part (b) of the theorem above and its accompanying formulas below 
verify and extend Sun's recent observations \cite[formulas (2.12)--(2.13) in Conjecture 2.6]{Sun2022}:
\begin{gather*}
\sum_{k=0}
^\infty\frac{2^k\binom{3k}k}{3^{3k}}\mathsf H_k= \dfrac{3-\sqrt{3}}{2} \lambda-\dfrac{3\big(\sqrt{3}+1\big)}4\varLambda+\sqrt{3}\widetilde{\varLambda}\in\mathfrak Z_1(12)+\sqrt{3}\mathfrak Z_{1}(12)\vphantom{\frac{\frac{\frac\int1}{1}}{1}},\\
\sum_{k=0}
^\infty\frac{2^k\binom{3k}k}{3^{3k}}\mathsf H_k^{(2)}= -12 \sqrt{3} \Li_2\bigg(\dfrac{\sqrt{3}-1}{2}\bigg)+3\big( \sqrt{3}-1\big) \Li_2\big(2-\sqrt{3}\big)+\dfrac{3\big( \sqrt{3}-1\big)}{2} \Li_2\left(\dfrac{1}{4}\right)\\
\hphantom{\sum_{k=0}^\infty\frac{2^k\binom{3k}k}{3^{3k}}\mathsf H_k^{(2)}=}{}
+\dfrac{3\big[\big(2 \sqrt{3}-3\big)\lambda^2-2\big(2\sqrt{3}-1\big)\lambda \widetilde{\varLambda }- \widetilde{\varLambda }^{2}\big]}{4}+\dfrac{\big(7 \sqrt{3}+3\big)\pi ^2}{12}\\
\hphantom{\sum_{k=0}^\infty\frac{2^k\binom{3k}k}{3^{3k}}\mathsf H_k^{(2)}=}{}
 \in\mathfrak Z_2(12)+\sqrt{3}\mathfrak Z_{2}(12),
\\
\sum_{k=0}
^\infty\frac{2^k\binom{3k}k}{3^{3k}}\mathsf H_{2k}= \dfrac{3-\sqrt{3}}{4} \lambda-\dfrac{3\big(\sqrt{3}+1)}4\big(\varLambda-\widetilde{\varLambda}\big)\in\mathfrak Z_1(12)+\sqrt{3}\mathfrak Z_{1}(12)\\
\hphantom{\sum_{k=0}^\infty\frac{2^k\binom{3k}k}{3^{3k}}\mathsf H_{2k}=}{}
\subset \mathfrak Z_1(24)+\sqrt{3}\mathfrak Z_{1}(24),\\
\sum_{k=0}
^\infty\frac{2^k\binom{3k}k}{3^{3k}}\hbar_k= -\dfrac{3+\sqrt{3}}{2}\lambda-\sqrt{3}\widetilde{\varLambda}\in\mathfrak Z_1(12)+\sqrt{3}\mathfrak Z_{1}(12)\vphantom{\frac{\frac{\frac\int1}{1}}{1}},\\
\sum_{k=0}
^\infty\frac{2^k\binom{3k}k}{3^{3k}}\hbar_k\mathsf H_k= -30 \sqrt{3} \Li_2\!\bigg(\dfrac{\sqrt{3}-1}{2}\bigg)+3\big( \sqrt{3}-1\big) \Li_2\big(2-\sqrt{3}\big)+\dfrac{15}{4}\big( \sqrt{3}-1\big) \Li_2\!\left(\dfrac{1}{4}\right)\\
\hphantom{\sum_{k=0}^\infty\frac{2^k\binom{3k}k}{3^{3k}}\hbar_k\mathsf H_k=}{}
+\dfrac{6\big(1-5 \sqrt{3}\big) \lambda \widetilde{\varLambda }+6 \sqrt{3} \varLambda \widetilde{\varLambda }-3\big(3 \sqrt{3}+1\big) \widetilde{\varLambda }^2+\big(17 \sqrt{3}-33\big) \lambda ^2}{4}\\
\hphantom{\sum_{k=0}^\infty\frac{2^k\binom{3k}k}{3^{3k}}\hbar_k\mathsf H_k=}{}
+\frac{3\big( \sqrt{3}+3\big) \lambda \varLambda}{4}+\dfrac{\big(13 \sqrt{3}+3\big)\pi ^2}{8} \in\mathfrak Z_2(12)+\sqrt{3}\mathfrak Z_{2}(12),
\end{gather*}
where $\lambda:= \log2 $, $ \varLambda:= \log3$, $ \widetilde{\varLambda}:= \log\big(2+\sqrt{3}\big)$, and $ \hbar_k:= 3\mathsf H_{3 k}-2\mathsf H_{2 k}-\mathsf H_k-3 \log 3$.
\end{Remark}

\begin{Remark}In part (c) of the theorem above, we do not need dedicated spaces for convergent series like (cf.\ \cite[Remark 2.8]{Sun2022})\begin{align}
\sum_{k=0}^\infty{\binom{4k}{2k}}z^{2k}\mathsf H_{2k}^{(r)}={}&\frac{1}{2}
\sum_{k=0}^\infty{\binom{2k}{k}}\big[z^{k}+(-z)^k\big]\mathsf H_{k}^{(r)},\label{eq:4k2k_a}\\
\sum_{k=0}^\infty{\binom{4k}{2k}}z^{2k}\mathsf H_{4k}^{(r)}={}&\frac{1}{2}
\sum_{k=0}^\infty{\binom{2k}{k}}\big[z^{k}+(-z)^k\big]\mathsf H_{2k}^{(r)},\label{eq:4k2k_b}
\end{align}since the right-hand sides of these two equations are fully characterized by \cite[formulas (1.1) and~(1.2)]{Sun2022} for $ r=1$, and by \eqref{eq:chi_fib''}--\eqref{eq:ups_fib''} in Section~\ref{subsec:CMZV_8_12} below for $ r\in\mathbb Z_{>1}$.\end{Remark}

\begin{Theorem}[logarithms in Sun's series involving $ \binom{3k}k$, $ \binom{4k}{2k}$ and harmonic numbers]\label{thm:logSun}\quad
\begin{enumerate}\itemsep=0pt
\item[$(a)$] If $\varphi\in(0,\pi/3)$, then we have
\begin{gather}
\sum_{k=0}^\infty\frac{\binom{3k}k}{3^{3k}}\left( 4\cos^2\frac{3\varphi}{2} \right)^k\mathsf H_k=\frac{3\sqrt{3}}{2\pi}\int_0^\infty\frac{\Li_1\Big( \frac{4X^{3}\cos^2\frac{3\varphi}{2}}{(1+X^3)^{2}} \Big)}{1-\frac{4X^{3}\cos^2\frac{3\varphi}{2}}{(1+X^3)^{2}}}\frac{\D X}{1+X^3}\nonumber\\
\qquad=\frac{\sqrt{3}}{\sin\frac{3\varphi}{2}}\left[ \cos \frac{\varphi }{2} \log \frac{2 \cos ^2\left(\frac{\varphi }{2}-\frac{\pi }{3}\right)}{\sqrt{3} \sin \varphi } +\cos \left(\frac{\varphi }{2}-\frac{\pi }{3}\right) \log \frac{2 \cos ^2\frac{\varphi }{2}}{\sqrt{3 }\sin \big(\frac{2 \pi }{3}-\varphi \big)}\right],\label{eq:3kHklog}\\
\sum_{k=0}^\infty\frac{\binom{3k}k}{3^{3k}}\left( 4\cos^2\frac{3\varphi}{2} \right)^k(
3\mathsf H_{3 k}-2\mathsf H_{2 k}-\mathsf H_k-3 \log 3)\nonumber\\
\qquad=\frac{3\sqrt{3}}{2\pi}\int_0^\infty\frac{\log\frac{X^{3}}{(1+X^3)^{2}}}{1-\frac{4X^{3}\cos^2\frac{3\varphi}{2}}{(1+X^3)^{2}}}\frac{\D X}{1+X^3}\nonumber\\
\qquad=-\frac{2\sqrt{3}}{\sin\frac{3\varphi}{2}}\left[ \cos\frac{\varphi }{2} \log \left(2\cos \left(\frac{\varphi }{2}-\frac{\pi }{3}\right)\right)+\cos \left(\frac{\varphi }{2}-\frac{\pi }{3}\right) \log \left(2 \cos \frac{\varphi }{2}\right) \right],\label{eq:3kH3klog}\\
\sum_{k=0}^\infty\frac{\binom{3k}k}{3^{3k}}\left( 2\cos\frac{3\varphi}{2} \right)^{2k}\mathsf H_{2k}\nonumber\\
\qquad=\frac{3\sqrt{3}}{2\pi}\int_0^\infty\left[\frac{\Li_1\Big( \frac{2x^{3}\cos\frac{3\varphi}{2}}{1+x^6} \Big)}{1-\frac{2x^{3}\cos\frac{3\varphi}{2}}{1+x^6}}+\frac{\Li_1\Big(\!-\! \frac{2x^{3}\cos\frac{3\varphi}{2}}{1+x^6} \Big)}{1+\frac{2x^{3}\cos\frac{3\varphi}{2}}{1+x^6}}\right]\frac{x\D x}{1+x^6}=\frac{\sqrt{3}}{\sin\frac{3\varphi}{2}}\nonumber\\
\qquad\times\left[ \cos \frac{\varphi }{2} \log \frac{2 \cos \frac{\varphi }{2}\cos \left(\frac{\varphi }{2}-\frac{\pi }{3}\right)}{\sqrt{3 }\sin \varphi } +\cos \left(\frac{\varphi }{2}-\frac{\pi }{3}\right) \log \frac{2\cos \frac{\varphi }{2}\cos \left(\frac{\varphi }{2}-\frac{\pi }{3}\right)}{\sqrt{3} \sin\big(\frac{2 \pi }{3}-\varphi \big)} \right].\!\!\!\label{eq:3kH2klog}
\end{gather}
\item[$(b)$] If $\psi \in(0,\pi/4)$, then we have
\begin{gather}
\sum_{k=0}^\infty\frac{\binom{4k}{2k}}{2^{6k}}\big(4\cos^22\psi\big)^k\mathsf H_k\nonumber\\
\qquad{} =\frac{2\sqrt{2}}{\pi}\int_0^\infty\frac{\Li_1\left( \frac{4X^{4}\cos^22\psi}{(1+X^4)^{2}} \right)}{1-\frac{4X^{4}\cos^22\psi}{(1+X^4)^{2}}}\frac{X^{2}\D X}{1+X^4} =\frac{\log\frac{( \csc \psi +\sqrt{2})^2}{4 (\cot \psi +1)}}{ \sqrt{2}\sin\psi }+\frac{ \log\frac{( \sec \psi +\sqrt{2})^2}{4 (\tan \psi +1)}}{ \sqrt{2}\cos\psi}.\label{eq:4kHklog}
\end{gather}\end{enumerate}
\end{Theorem}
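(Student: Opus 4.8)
The plan is to prove the two equalities in the final display \eqref{eq:4kHklog} separately, and to note that the three identities of part~(a) follow by the identical mechanism with $X^4$ replaced by $X^3$; the genuine computation lies in the closed-form evaluation, which is tractable precisely because $r=1$ forces $\Li_1(w)=-\log(1-w)$.

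First I would obtain the series-to-integral identity as the $r=1$, one-parameter analogue of \eqref{eq:4kZr24}. Expanding $\frac{\Li_1(w)}{1-w}=\sum_{k\ge1}\mathsf H_k w^k$ at $w=w(X):=\frac{4X^4\cos^2 2\psi}{(1+X^4)^2}$ and exchanging sum and integral (valid since $4\cos^2 2\psi<4$ keeps the series within its disc of convergence for $\psi\in(0,\pi/4)$), the claim reduces to the single moment
\[
\frac{2\sqrt2}{\pi}\int_0^\infty\frac{X^{4k+2}}{(1+X^4)^{2k+1}}\,\D X=\frac{\binom{4k}{2k}}{2^{6k}}.
\]
Here $u=X^4$ turns the left side into the Beta value $\frac{\sqrt2}{2\pi}\frac{\Gamma(k+\frac34)\Gamma(k+\frac14)}{(2k)!}$, and Legendre's duplication formula together with $\Gamma(2k+\frac12)=\frac{(4k)!}{2^{4k}(2k)!}\sqrt\pi$ collapses this to the stated binomial quotient; since $\mathsf H_0=0$ kills the $k=0$ term, the two sides agree.

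The closed form is where the work concentrates. I would first use the invariance $w(1/X)=w(X)$: as $X\mapsto 1/X$ sends $\frac{X^2\,\D X}{1+X^4}$ to $\frac{\D X}{1+X^4}$ while fixing $w$, symmetrizing replaces $\frac{X^2\,\D X}{1+X^4}$ by $\frac12\frac{(1+X^2)\,\D X}{1+X^4}$, so the integral becomes $\frac{\sqrt2}{\pi}\int_0^\infty\frac{-\log(1-w)}{1-w}\frac{(1+X^2)\,\D X}{1+X^4}$. The substitution $v=X-\frac1X$ then linearizes everything: it yields $\frac{(1+X^2)\,\D X}{1+X^4}=\frac{\D v}{v^2+2}$ over $v\in(-\infty,\infty)$ and $w=\frac{4\cos^2 2\psi}{(v^2+2)^2}$, whence
\[
1-w=\frac{(v^2+4\sin^2\psi)(v^2+4\cos^2\psi)}{(v^2+2)^2}.
\]
Writing $a=2\sin\psi$, $b=2\cos\psi$ (so $a^2+b^2=4$), the rational prefactor simplifies by the partial fraction $\frac{v^2+2}{(v^2+a^2)(v^2+b^2)}=\frac12\bigl(\frac1{v^2+a^2}+\frac1{v^2+b^2}\bigr)$, and $-\log(1-w)$ splits into $2\log(v^2+2)-\log(v^2+a^2)-\log(v^2+b^2)$. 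The whole integral thereby reduces to a $\mathbb Q$-linear combination of the elementary transform $\int_{-\infty}^\infty\frac{\log(v^2+\beta^2)}{v^2+\alpha^2}\,\D v=\frac{2\pi}{\alpha}\log(\alpha+\beta)$ (itself proved by differentiating in $\beta$) with $\alpha\in\{a,b\}$, $\beta\in\{\sqrt2,a,b\}$.

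I expect the last step --- reconciling the resulting logarithms with the stated form --- to be the main obstacle, though a routine one: one must regroup the eight logarithmic terms, use $(2\sin\psi+\sqrt2)^2=2(1+\sqrt2\sin\psi)^2$ and $a+b=2(\sin\psi+\cos\psi)$, and rewrite the arguments through $\csc\psi,\cot\psi,\sec\psi,\tan\psi$ to recover $\frac{1}{\sqrt2\sin\psi}\log\frac{(\csc\psi+\sqrt2)^2}{4(\cot\psi+1)}+\frac{1}{\sqrt2\cos\psi}\log\frac{(\sec\psi+\sqrt2)^2}{4(\tan\psi+1)}$, checking throughout that every argument stays positive on $(0,\pi/4)$ so that no branch of the logarithm is crossed. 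The three formulas of part~(a) run along the same lines: \eqref{eq:3kHklog} is the verbatim $X^3$ analogue, the combination $3\mathsf H_{3k}-2\mathsf H_{2k}-\mathsf H_k-3\log3$ in \eqref{eq:3kH3klog} is produced by the logarithmic weight $\log\frac{X^3}{(1+X^3)^2}$ (i.e.\ by differentiating the generating function), and the $\mathsf H_{2k}$ series \eqref{eq:3kH2klog} emerges from the two-term integrand after the same $X\mapsto1/X$ symmetrization.
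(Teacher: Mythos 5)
Your treatment of part (b) is correct, and it takes a genuinely different route from the paper's: instead of deforming a complex contour onto the branch-cut arc $\bigl\{{\rm e}^{{\rm i}\theta}\colon\theta\in[\psi,\pi/2-\psi]\bigr\}$ (which is what the paper does via the pole-free combination $R_\psi^\square$ in \eqref{eq:4kHklog_sq}, together with Boyadzhiev's formula for the trailing $z\,\D z$ term), you stay on the real line, use $w(1/X)=w(X)$ to symmetrize, and linearize with $v=X-1/X$. I checked the key steps --- the Beta-function moment, the factorization $1-w=\frac{(v^2+4\sin^2\psi)(v^2+4\cos^2\psi)}{(v^2+2)^2}$, the partial fraction $\frac{v^2+2}{(v^2+a^2)(v^2+b^2)}=\frac12\bigl(\frac{1}{v^2+a^2}+\frac{1}{v^2+b^2}\bigr)$, and the regrouping of $\frac{\sqrt2}{a}\log\frac{(a+\sqrt2)^2}{2a(a+b)}+\frac{\sqrt2}{b}\log\frac{(b+\sqrt2)^2}{2b(a+b)}$ into the stated closed form --- and they all work; what this buys is a completely elementary proof of \eqref{eq:4kHklog} resting only on $\int_{-\infty}^{\infty}\frac{\log(v^2+\beta^2)}{v^2+\alpha^2}\,\D v=\frac{2\pi}{\alpha}\log(\alpha+\beta)$.

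The genuine gap is your claim that part (a) is ``the verbatim $X^3$ analogue.'' The substitution $v=X-1/X$ turns the half-line integral into a whole-line integral precisely because $1+X^4$ is even and reciprocal, giving $\frac{(1+X^2)\,\D X}{1+X^4}=\frac{\D v}{v^2+2}$ with $v$ ranging over all of $\mathbb R$. For $1+X^3$ the same symmetrization only yields $\frac12\int_0^\infty(\cdots)\frac{\D X}{X^2-X+1}$, still a half-line integral; since $1-w=\frac{\prod_{j=0}^{2}\bigl(X^2-2X\cos(\varphi+2\pi j/3)+1\bigr)}{(1+X^3)^2}$, the resulting pieces have the shape $\int_0^\infty\frac{(aX+b)\log(X^2-2X\cos\theta'+1)}{X^2-2X\cos\theta+1}\,\D X$, and half-line integrals of this kind generically produce Clausen/dilogarithm constants rather than logarithms (compare $\int_0^\infty\frac{\log(1+x)}{1+x^2}\,\D x=\frac{\pi}{4}\log2+G$); moreover several individual pieces diverge at infinity before recombination. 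Proving that all non-elementary contributions cancel is exactly the content of \eqref{eq:3kHklog}--\eqref{eq:3kH2klog}, and your method as stated does not deliver it. The paper's mechanism for part (a) is intrinsically complex-analytic: it forms the $\mathbb C$-linear combination \eqref{eq:3kHklog_h} of the integrals against $\D z$, $z\,\D z$, $z^2\,\D z$ over the rays $\arg z\in\{0,2\pi/3\}$ so that the poles at ${\rm e}^{{\rm i}\varphi}$ and ${\rm e}^{2\pi{\rm i}/3-{\rm i}\varphi}$ cancel, then collapses the contour onto the arc $\theta\in[\varphi,2\pi/3-\varphi]$, where the jump of $\Li_1$ is $\pm2\pi{\rm i}$, leaving only integrals of rational functions (hence logarithms); for \eqref{eq:3kH3klog} one must additionally pick up the residues at those two poles and the branch cut of $\log\frac{z^3}{(1+z^3)^2}$ along the ray to $\infty{\rm e}^{\pi{\rm i}/3}$. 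You would need to supply an argument of this kind (or a substantially more careful real-variable one) before part (a) can be considered proved.
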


\begin{Remark}Picking $ \varphi=\pi/6$ in part (a) of the theorem above, we recover some entries of Remark~\ref{tab:3kSun'}. Plugging $ \varphi=2\pi/15$ into
\begin{align*}
\sum_{k=0}^\infty \frac{\binom{3k}k}{3^{3k}}\left( 4\cos^2\frac{3\varphi}{2} \right)^k(\mathsf H_{3k}-\mathsf H_{2k})=\frac{\cos \frac{\varphi }{2}+\cos \left(\frac{\varphi }{2}-\frac{\pi }{3}\right) }{\sqrt{3} \sin\frac{3 \varphi }{2}}\log\frac{3}{4 \cos\frac{\varphi }{2}\cos \left(\frac{\varphi }{2}-\frac{\pi }{3}\right)},
\end{align*} we arrive at \begin{align*}
\sum_{k=0}^\infty \binom{3k}k\left(\frac{3+\sqrt5}{54}\right)^k(\mathsf H_{3k}-\mathsf H_{2k})=\frac{1+\sqrt{5}}{2}\left(\log3-2\log\frac{1+\sqrt{5}}{2}\right),
\end{align*}as suggested by Sun \cite[formula~(2.15) in Conjecture~2.7]{Sun2022}.\end{Remark}

\begin{Remark}Together with the right-hand sides of \eqref{eq:4k2k_a}--\eqref{eq:4k2k_b} for $r=1$, part (b) of the theorem above answers a question of Sun \cite[formula~(2.16) in Conjecture~2.8]{Sun2022}.
\end{Remark}
\begin{Remark}For $ \psi\in\{\pi/8,\pi/12\}$, one can paraphrase \eqref{eq:4kHklog} as
\begin{align*}
&\sum_{k=0}^\infty\frac{\binom{4k}{2k}}{2^{5k}}\mathsf H_k= \sqrt{1+\frac{1}{\sqrt{2}}}\R\left[ 7\Li_1\big({\rm e}^{\pi {\rm i}/8}\big)+7\Li_1\big({\rm e}^{3\pi {\rm i}/8}\big)+7\Li_1\big({\rm e}^{5\pi {\rm i}/8}\big)+3\Li_1\big({\rm e}^{7\pi {\rm i}/8}\big)\right]\\
&\hphantom{\sum_{k=0}^\infty\frac{\binom{4k}{2k}}{2^{5k}}\mathsf H_k=}{}
+\sqrt{1-\frac{1}{\sqrt{2}}}\R\left[ \Li_1\big({\rm e}^{\pi {\rm i}/8}\big)-3\Li_1\big({\rm e}^{3\pi {\rm i}/8}\big)+\Li_1\big({\rm e}^{5\pi {\rm i}/8}\big)+\Li_1\big({\rm e}^{7\pi {\rm i}/8}\big)\right],
\\
&\sum_{k=0}^\infty\frac{3^k\binom{4k}{2k}}{2^{6k}}\mathsf H_k=2\R\bigl[ \Li_1\big({\rm e}^{\pi {\rm i}/6}\big)+\Li_1\big({\rm e}^{2\pi {\rm i}/3}\big)\bigr]+2\sqrt{3}\R\bigl[ 2\Li_1\big({\rm e}^{\pi {\rm i}/6}\big)+3\Li_1\big({\rm e}^{\pi {\rm i}/2}\big)\bigr],
\end{align*}
thus testifying to \eqref{eq:4kZr16} and \eqref{eq:4kZr24}.
\end{Remark}

\section[CMZV characterizations of certain series involving (2k choose k)]{CMZV characterizations of certain series involving $\boldsymbol{{ 2k \choose k}}$}\label{sec:centr_binom}

\subsection{Recursions and fibrations of polylogarithms}
Adopting
 the notational conventions of Frel\-les\-vig--Tom\-ma\-si\-ni--Wever \cite[Section~2]{Frellesvig2016}, we define Goncharov's generalized polylogarithm (GPL) recursively by an integral along a straight line segment\begin{align}
G(\alpha_{1},\dots,\alpha_n;z):= \int_0^z\frac{G(\alpha_2,\dots,\alpha_n;x)\D x}{x-\alpha_1}\label{eq:GPL_rec}
\end{align} for $ |\alpha_1|^2+\dots+|\alpha_n|^2\neq0$, with the extra settings that \begin{align}
G(\underset{m }{\underbrace{0,\dots,0 }};z):= \frac{\log^mz}{m!},\qquad G(-\!\!-;z):= 1.\label{eq:GPL_log_bd}
\end{align} These GPLs are related to MPLs [defined in \eqref{eq:Mpl_defn}] by the following equation (see \cite[formula~(1.3)]{Panzer2015} or \cite[formula~(2.6)]{Frellesvig2016})
\begin{align}
G\Big(\underset{a_1-1 }{\underbrace{0,\dots,0 }},\widetilde \alpha_1,\underset{a_2-1 }{\underbrace{0,\dots,0 }},\widetilde \alpha_2,\dots,\underset{a_n-1 }{\underbrace{0,\dots,0 }},\widetilde \alpha_n;z\Big)=(-1)^n\Li_{a_1,\dots,a_n}\left( \frac{z}{\widetilde \alpha_1} ,\frac{\widetilde \alpha_1}{\widetilde \alpha_2},\dots, \frac{\widetilde \alpha_{n-1}}{\widetilde \alpha_n}\right)\!\!\!\!\!\label{eq:GPL_MPL}
\end{align}
when $ \prod_{j=1}^n\widetilde \alpha_j\neq0$.
As
a result of this GPL-MPL correspondence, we may reformulate \eqref{eq:Zk(N)_defn} as \begin{align}
\mathfrak Z_{k}(N):= \Span_{\mathbb Q}\bigl\{G(z_1,\dots,z_k;1)\mid z_1^N,\dots,z_{k}^N\in\{0,1\}, \,   z_1\neq1,\, z_{k}\neq0 \bigr\}.\tag{\ref{eq:Zk(N)_defn}$'$}\label{eq:Zk(N)_defn'}
\end{align}
Here, the convergence of $G(z_1,\dots,z_k;1)$ requires that $ z_1\neq1$, while \eqref{eq:GPL_MPL} stipulates that $ z_k\neq0$.

Echoing the GPL recursion \eqref{eq:GPL_rec} for a special class of MPLs
\begin{align*}
\Li_k(z)=-G(\underset{k-1 }{\underbrace{0,\dots,0 }},1;z),
\end{align*}
we analytically continue polylogarithms $ \Li_k(z):= \sum_{n=1}^\infty\frac{z^k}{n^k},|z|<1$
[cf.\ \eqref{eq:Mpl_defn}] to all $ z\in\mathbb C\smallsetminus[1,\infty)$, by
\begin{align*}
& \R \Li_1(z) :=  -\log|1-z|, \\
& \I\Li_1(z) := -\arg(1-z)\in[-\pi,\pi),\ \\
& \Li_{k+1}(z) := \int_0^z \Li_k(w)\frac{\D w}{w},\qquad k\in\mathbb Z_{>0},
\end{align*}
where the integration path is also a straight line segment. With the recursive definition of~polylogarithms,
one can show inductively that
\begin{align}
\Li_k(x+{\rm i} 0^{+})-\Li_k(x-{\rm i}0^{+})=\frac{2\pi {\rm i}}{(k-1)!}\log^{k-1} x\label{eq:Lik_jump}
\end{align}
holds for all $x\in(1,\infty)$ and $k\in\mathbb Z_{>0} $.

In the rest of this note, we will frequently need to integrate the differential $ 1$-form\begin{align*}
G(\alpha_{1}(t),\dots,\alpha_n(t);z(t))\frac{\D t}{R(t)}
\end{align*}over certain contours, where $ \alpha_{1}(t),\dots,\alpha_n(t)$, $z(t)$, $R(t)$ are rational functions of $t$.
To fully exploit the recursion \eqref{eq:GPL_rec} while evaluating such contour integrals, we take fibration procedures \cite[Lemma 2.14 and Corollary 3.2]{Panzer2015} to rewrite $ G(\alpha_{1}(t),\dots,\alpha_n(t);z(t))$ in terms of finitely many new GPLs in the shape of $ G(\beta_1,\dots,\beta_n;t)$, where the constants $ \beta_1,\dots,\beta_n$ are sorted from the zeros and poles of the rational functions $ z(t)-\alpha_j(t)$, $\alpha_j(t)-\alpha_{j'}(t)$, $\alpha_n(t)$ for $ j,j'\in\mathbb Z\cap[1,n]$.

For example, the branch cut analysis in \cite[Corollary 3.8\,(a)]{Zhou2022mkMpl} brings us the following fibration structure for $ r\in\mathbb Z_{>1}$ and $ 4|\chi|\leq |1+\chi|^{2}$:
\begin{gather}
\ointctrclockwise_{|z|=1}\frac{\Li_r\left( \frac{\chi(1+z)^{2}}{z(1+\chi)^{2}} \right)}{1- \frac{\chi(1+z)^{2}}{z(1+\chi)^{2}}}\frac{\D z}{2\pi {\rm i}z}=\frac{1+\chi}{1-\chi}\ointctrclockwise_{|z|=1}\left( \frac{1}{z-\chi} -\frac{1}{z-\frac{1}{\chi}}\right)\Li_r\left( \frac{\chi(1+z)^{2}}{z(1+\chi)^{2}} \right)\frac{\D z}{2\pi {\rm i}}\nonumber\\
\qquad\in\frac{1+\chi}{1-\chi}\Span_{\mathbb Q}\big\{(\pi {\rm i})^{r-\ell}G(\alpha_1,\dots,\alpha_\ell;\chi)\in\mathbb C \mid \alpha_1^{2},\dots,\alpha _\ell^{2}\in\{0,1\},\,\ell\in\mathbb Z\cap[0,r]\big\}\nonumber\\
\qquad=: \frac{1+\chi}{1-\chi}\mathfrak H_{r}^{(\chi)}(2),\label{eq:chi_fib}
\end{gather}
while the argument in \cite[Corollary 3.8\,(b)]{Zhou2022mkMpl} leads us to the following relation for $ r\in\mathbb Z_{>1}$ and~$2|\upsilon|\leq |1+\upsilon^{2}|$:
\begin{gather}
\ointctrclockwise_{|z|=1}\frac{\Li_r\left( \pm\frac{\upsilon(1+z^{2})}{z(1+\upsilon^{2})} \right)}{1\mp\frac{\upsilon(1+z^{2})}{z(1+\upsilon^{2})}}\frac{\D z}{2\pi {\rm i}z}=\frac{1+\upsilon^2}{1-\upsilon^2}\ointctrclockwise_{|z|=1}\left( \frac{1}{z\mp\upsilon} -\frac{1}{z\mp\frac{1}{\upsilon}}\right)\Li_r\left( \pm\frac{\upsilon(1+z^{2})}{z(1+\upsilon^{2})} \right)\frac{\D z}{2\pi {\rm i}}\nonumber\\
\qquad\in\frac{1+\upsilon^2}{1-\upsilon^2}\Span_{\mathbb Q}\big\{(\pi {\rm i})^{r-\ell}G(\alpha_1,\dots,\alpha_\ell;\upsilon)\in\mathbb C \mid \alpha_1^{4},\dots,\alpha _\ell^{4}\in\{0,1\},\,\ell\in\mathbb Z\cap[0,r]\big\}\nonumber\\
\qquad=: \frac{1+\upsilon^2}{1-\upsilon^2}\mathfrak H_{r}^{(\upsilon)}(4).\label{eq:ups_fib}
\end{gather}
Here in both \eqref{eq:chi_fib} and \eqref{eq:ups_fib}, the integrands are pole-free when the contours are shrunk to tight loops wrapping around the branch cuts of polylogarithms,
because the jump discontinuities in~\eqref{eq:Lik_jump} tend to zero as $ x\to1+0^+$, for $ k\in\mathbb Z_{>1}$.

\subsection{Some CMZVs of levels 8 and 12\label{subsec:CMZV_8_12}}With the foregoing preparations, we are ready for the verification of our claims in \eqref{eq:2kZ8}--\eqref{eq:2kZ12'}.

\begin{proof}[Proof of Theorem \ref{thm:2k}]Recall that \cite[Section~5.1]{Mezo2014}\begin{align}
\sum_{k=1}^\infty w^k \mathsf H_{k}^{(r)}=\frac{\Li_{r}(w)}{1-w}\label{eq:H_gf}
\end{align}holds for $ |w|<1$, $r\in\mathbb Z_{>0}$, and \begin{align*}
\ointctrclockwise_{|z|=1}\left[ \frac{(1+z)^2}{z} \right]^k\frac{\D z}{2\pi {\rm i}z}=\frac{1}{2\pi}\int_0^{2\pi}\left( 4\cos^2\frac{\theta}{2} \right)^k\D\theta={\binom{2k}k}.
\end{align*}Thus, for $ 4|\chi|\leq |1+\chi|^{2}$, the relation\begin{align}
\sum_{k=0}^\infty\binom{2k} k\left[ \frac{\chi}{(1+\chi)^{2}} \right]^{k}\mathsf H_k^{(r)}={}&\displaystyle \ointctrclockwise_{|z|=1}\frac{\Li_r\left( \frac{\chi(1+z)^{2}}{z(1+\chi)^{2}} \right)}{1-\frac{\chi(1+z)^{2}}{z(1+\chi)^{2}}}\frac{\D z}{2\pi {\rm i}z}\in\frac{1+\chi}{1-\chi}\mathfrak H_{r}^{(\chi)}(2)\tag{\ref{eq:chi_fib}$'$}\label{eq:chi_fib''}
\end{align} is immediate from termwise integration and reference to \eqref{eq:chi_fib}. Likewise, from\begin{align}
\sum_{k=1}^\infty w^{2k}\mathsf H_{2k}^{(r)}=\frac{\Li_{r}(w)}{1-w}+\frac{\Li_{r}(-w)}{1+w}\label{eq:H_gf'}
\end{align} for $ |w|<1$, $r\in\mathbb Z_{>0}$, one deduces the following relation for $ 2|\upsilon|\leq |1+\upsilon^{2}| $:
\begin{align}
\sum_{k=0}^\infty\binom{2k} k\left( \frac{\upsilon}{1+\upsilon^{2}} \right)^{2n}\mathsf H_{2k}^{(r)}={}&\ointctrclockwise_{|z|=1}\left[\frac{\Li_r\left( \frac{\upsilon(1+z^{2})}{z(1+\upsilon^{2})} \right)}{1-\frac{\upsilon(1+z^{2})}{z(1+\upsilon^{2})}}+\frac{\Li_r\left(- \frac{\upsilon(1+z^{2})}{z(1+\upsilon^{2})} \right)}{1+\frac{\upsilon(1+z^{2})}{z(1+\upsilon^{2})}}\right]\frac{\D z}{2\pi {\rm i}z}\nonumber\\
&\in{}\frac{1+\upsilon^2}{1-\upsilon^2}\mathfrak H_{r}^{(\upsilon)}(4),
\tag{\ref{eq:ups_fib}$'$}\label{eq:ups_fib''}
\end{align}
upon referring to \eqref{eq:ups_fib}. Thus, with specially chosen parameters \begin{align*}
\frac{\chi}{(1+\chi)^{2}}=\begin{cases}\dfrac{1}{8}, & \chi=\big(\sqrt{2}-1\big)^2,\vspace{1mm}\\
\dfrac{1}{16}, & \chi=\big(2-\sqrt{3}\big)^2, \\
\end{cases}\qquad\left( \frac{\upsilon}{1+\upsilon^{2}} \right)^{2}=\begin{cases}\dfrac{1}{8}, & \upsilon=\sqrt{2}-1,\vspace{1mm} \\
\dfrac{1}{16}, & \upsilon=2-\sqrt{3}, \\
\end{cases}
\end{align*} we have confirmed the equalities in \eqref{eq:2kZ8}--\eqref{eq:2kZ12'}.

To build the remaining halves of \eqref{eq:2kZ8}--\eqref{eq:2kZ12'} on the knowledge of \eqref{eq:chi_fib''} and \eqref{eq:ups_fib''}, we notice that
\begin{align*}
\frac{1+\chi}{1-\chi}=\begin{cases}\sqrt{2}, & \chi=\big(\sqrt{2}-1\big)^2, \\
\dfrac{2}{\sqrt{3}}, & \chi=\big(2-\sqrt{3}\big)^2,
\end{cases}
\qquad \frac{1+\upsilon^2}{1-\upsilon^2}=\begin{cases}\sqrt{2}, & \upsilon=\sqrt{2}-1, \\
\dfrac{2}{\sqrt{3}}, & \upsilon=2-\sqrt{3},
\end{cases}
\end{align*} while the \texttt{IterIntDoableQ} function in Au's \texttt{MultipleZetaValues} package (v1.2.0) \cite{Au2022a} certifies that
\begin{align*}
& \mathfrak H_{r}^{(\chi)}(2)\subset  \begin{cases}\mathfrak Z_r(8), & \chi=\big(\sqrt{2}-1\big)^2, \\
\mathfrak Z_r(12), & \chi=\big(2-\sqrt{3}\big)^2,
\end{cases}
\qquad \mathfrak H_{r}^{(\upsilon)}(4)\subset \begin{cases}\mathfrak Z_r(8), & \upsilon=\sqrt{2}-1, \\
\mathfrak Z_r(12), & \upsilon=2-\sqrt{3}.
\end{cases}
\end{align*}
 Au's abovementioned package produces analytic evaluations of integrals like \eqref{eq:2kZ8}--\eqref{eq:2kZ12'}, so long as the results are in the $ \mathbb Q$-vector spaces $ \mathfrak Z_k(8)$, $k\in\{1,2,3,4\}$ or $ \mathfrak Z_k(12)$, $k\in\{1,2,3\}$, hence Remark~\ref{tab:lin_binom_Sun}. \end{proof}

In the proof above and some arguments in the next section, we are effectively dealing with special cases of Au's integrals \cite[Section~3]{Au2022a}
\begin{align}\int_a^b {R_0(x)} \Li_{r_1}(R_1(x))\cdots \Li_{r_k}(R_k(x))\D x,\label{eq:Au_intLi}\end{align}
where the 1-form $R_0(x)\D x$ has only simple poles on the complex sphere, and the functions~$ R_j(x)$, $1\leq j\leq k$, are rational. One may find out the structure of \eqref{eq:Au_intLi} by looking at a finite set~$ S$ that is the union of~$ \{a,b,\infty\}$ with $ R_j^{-1}(0)$, $ R_j^{-1}(1)$, $ R_j^{-1}(\infty)$ for $1\leq j\leq k$ and all the poles of~$R_0(x)\D x$. If $S$ is a subset of $ \{0,\infty\}\cup\big\{{\rm e}^{2\pi {\rm i} m/N} \big| m\in\mathbb Z\cap[1,N]\big\}$, then \eqref{eq:Au_intLi} is expressible through CMZVs of level $N$. In some other situations, feeding the set $S\smallsetminus\{\infty\} $ to the \texttt{IterIntDoableQ} function in Au's \texttt{MultipleZetaValues} package (v1.2.0) \cite{Au2022a}, one may also receive a CMZV level $N\leq12$ that is compatible with our analysis. However, residue calculus is still required in order to obtain the correct $ \overline{\mathbb Q}$-linear combinations of CMZVs in Theorems \ref{thm:2k}, \ref{thm:3k4k}, and \ref{thm:logSun}, and to eliminate extraneous factors of $ \frac{1}{2\pi {\rm i}}$.

\section[CMZV and logarithmic characterizations of certain series involving (3k choose k) and (4k choose 2k)]{CMZV and logarithmic characterizations of certain series\\ involving $\boldsymbol{{{3k}\choose k}}$ and $\boldsymbol{{{4k}\choose {2k}}}$}\label{sec:non-centr_binom}

\subsection[Integral representations of (3k choose  k)\^{}\{pm1\} and (4k choose 2k)]{Integral representations of $\boldsymbol{{{3k}\choose {k}}^{\pm1}}$ and $\boldsymbol{{{4k}\choose {2k}}}$}
It is an elementary exercise in Euler's (di)gamma functions that
\begin{gather*}
\frac{1}{k\binom{3k}k}= \int_0^1\big[t(1-t)^2\big]^k\frac{\D t}{t},\\
\frac{\mathsf H_{3k}-\mathsf H_k+\frac{1}{k}}{k\binom{3k}k}= -\int_0^1\big[t(1-t)^2\big]^k\log t\frac{\D t}{t},\\
\frac{\mathsf H_{3k}-\mathsf H_{2k}}{k\binom{3k}k}= -\int_0^1\big[t(1-t)^2\big]^k\log (1-t)\frac{\D t}{t}.
\end{gather*}
These identities support the integral representations of infinite series in \eqref{eq:3kZ4}--\eqref{eq:3kZ4''}, in conjunction with the generating function \eqref{eq:H_gf} for harmonic numbers.

One can verify\begin{align}\frac{2\pi}{3\sqrt{3}}
\frac{ \binom{3 k}{k}}{ 3^{3 k}}=\int_0^\infty\left[ \frac{X^3}{(1+X^{3})^2} \right]^{k}\frac{\D X}{1+X^3}\label{eq:3k_int_repn}
\end{align} and its equivalent form \big(up to a variable substitution $X=x^2$\big)
\begin{align}
\frac{\pi}{3\sqrt{3}}
\frac{ \binom{3 k}{k}}{ 3^{3 k}}=\int_0^\infty\left( \frac{x^3}{1+x^{6}} \right)^{2k}\frac{x\D x}{1+x^6}\tag{\ref{eq:3k_int_repn}$'$}\label{eq:3k_int_repn'}
\end{align}
by manipulating an integral representation for Euler's beta function \cite[item 8.380.3]{GradshteynRyzhik}. The~deri\-va\-tive of \eqref{eq:3k_int_repn} with respect to $k$ leaves us \begin{align}\frac{2\pi}{3\sqrt{3}}
\frac{ \binom{3 k}{k}}{ 3^{3 k}}(
3\mathsf H_{3 k}-2\mathsf H_{2 k}-\mathsf H_k-3 \log 3)=\int_0^\infty\left[ \frac{X^3}{(1+X^{3})^2} \right]^{k}\frac{\log\frac{X^3}{(1+X^{3})^2}\D X}{1+X^3}.\label{eq:3k_int_repn_H}
\end{align}Therefore, the generating function \eqref{eq:H_gf} brings us the equalities between series and integrals in \eqref{eq:3kHkZr}--\eqref{eq:3kH3kZ1} and \eqref{eq:3kHklog}--\eqref{eq:3kH2klog}.
In fact, as long as the inequalities $ \big|1+\varXi^3\big|^2<4\big|\varXi^3\big|$ and $ \big|1+\xi^6\big|<2\big|\xi^3\big|$ are honored, we have the following extensions (cf.\ \eqref{eq:H_gf}, \eqref{eq:H_gf'}, \eqref{eq:3k_int_repn}, \eqref{eq:3k_int_repn'}, and~\eqref{eq:3k_int_repn_H}) of \eqref{eq:3kHklog}--\eqref{eq:3kH2klog}:
\begin{gather*}
\sum_{k=0}^\infty\frac{\binom{3k}k}{3^{3k}}\left[ \frac{\big(1+\varXi^3\big)^2}{\varXi^3} \right]^k\mathsf H_k^{(r)}=\frac{3\sqrt{3}}{2\pi}\int_0^\infty\frac{\Li_r\left( \frac{X^{3}(1+\varXi^3)^{2}}{\varXi^3(1+X^3)^{2}} \right)}{1-\frac{X^{3}(1+\varXi^3)^{2}}{\varXi^3(1+X^3)^{2}}}\frac{\D X}{1+X^3},
\\
\sum_{k=0}^\infty\frac{\binom{3k}k}{3^{3k}}\left[ \frac{\big(1+\varXi^3\big)^2}{\varXi^3} \right]^k\big(
3\mathsf H_{3 k}^{} -2\mathsf H_{2 k}^{}-\mathsf H_k^{}-3 \log 3\big)\mathsf H_k^{(r)}\nonumber\\
\qquad{} =
\frac{3\sqrt{3}}{2\pi}\int_0^\infty\frac{\Li_r\left( \frac{X^{3}(1+\varXi^3)^{2}}{\varXi^3(1+X^3)^{2}} \right)}{1-\frac{X^{3}(1+\varXi^3)^{2}}{\varXi^3(1+X^3)^{2}}}\frac{\log\frac{X^{3}}{(1+X^3)^{2}}\D X}{1+X^3},
\\
\sum_{k=0}^\infty\frac{\binom{3k}k}{3^{3k}}\left( \frac{1+\xi^{6}}{\xi^3} \right)^{2k}\mathsf H_{2k}^{(r)}=\frac{3\sqrt{3}}{2\pi}\int_0^\infty\left[\frac{\Li_r\left( \frac{x^{3}(1+\xi^{6})}{\xi^3(1+x^6)} \right)}{1-\frac{x^{3}(1+\xi^{6})}{\xi^3(1+x^6)}}+\frac{\Li_r\left(- \frac{x^{3}(1+\xi^{6})}{\xi^3(1+x^6)} \right)}{1+\frac{x^{3}(1+\xi^{6})}{\xi^3(1+x^6)}}\right]\frac{x\D x}{1+x^6} 
\end{gather*}
for $ r\in\mathbb Z_{>0}$.

Akin to \eqref{eq:3k_int_repn}, we have \begin{align*}
\frac{\pi}{2\sqrt{2}}
\frac{ \binom{4 k}{2k}}{ 2^{6 k}}=\int_0^\infty\left[ \frac{X^4}{(1+X^{4})^2} \right]^{k}\frac{X^{2}\D X}{1+X^4},
\end{align*}so termwise integration of the generating function \eqref{eq:H_gf} steers us to \begin{align*}
\sum_{k=0}^\infty\frac{\binom{4k}{2k}}{2^{6k}}\left[ \frac{\big(1+\varXi^4\big)^2}{\varXi^4} \right]^k\mathsf H_k^{(r)}=\frac{2\sqrt{2}}{\pi}\int_0^\infty\frac{\Li_r\left( \frac{X^{4}(1+\varXi^4)^2}{\varXi^4(1+X^4)^{2}} \right)}{1-\frac{X^{4}(1+\varXi^4)^2}{\varXi^4(1+X^4)^{2}}}\frac{X^{2}\D X}{1+X^4}
\end{align*} for $ \big|1+\varXi^4\big|^2<4\big|\varXi^4\big|$ and $ r\in\mathbb Z_{>0}$, which proves \textit{a fortiori} the equalities in \eqref{eq:4kZr16}--\eqref{eq:4kZr24} and~\eqref{eq:4kHklog}.

\subsection{Some CMZVs of levels 4, 12, 16, and 24\label{subsec:CMZVs}}We start
by treating the CMZVs of level $4$ appearing in \eqref{eq:3kZ4}--\eqref{eq:3kZ4''}.
\begin{proof}[Proof of Theorem \ref{thm:3k4k}\,(a)]
First we demonstrate the following fibration structure for $ r\in\mathbb Z_{>0}$:\begin{align}
\Li_r\left( \frac{t(1-t)^2}{2} \right)\in\Span_{\mathbb Z} \{ G(\beta_1,\dots,\beta_r;t) \mid \beta_j\in\{0,1\},\, j\in\mathbb Z\cap[1,r); \, \beta_r\in\{2,{\rm i},-{\rm i}\}  \}.\!\label{eq:2i-i01}
\end{align}Clearly, the claim is true for \begin{align*}
\Li_1\left( \frac{t(1-t)^2}{2} \right)=-G(2;t)-G({\rm i};t)-G(-{\rm i};t).
\end{align*}From\begin{align*}
\Li_r\left( \frac{t(1-t)^2}{2} \right)=\int_0^t\left( \frac{1}{s}+\frac{2}{s-1} \right)\Li_{r-1}\left( \frac{s(1-s)^2}{2} \right)\D s
\end{align*}and the GPL recursion \eqref{eq:GPL_rec}, we may verify \eqref{eq:2i-i01} inductively.

Appealing again to the GPL recursion \eqref{eq:GPL_rec}, we see that
\begin{gather*}
\int_0^1\Li_r\left( \frac{t(1-t)^2}{2} \right)\frac{\D t}{t}\\
\qquad{} \in \Span_{\mathbb Z}\{  G(0,\beta_1,\dots,\beta_r;1)\mid
  \beta_j\in\{0,1\},j\in\mathbb Z\cap[1,r);\,\beta_r\in\{2,{\rm i},-{\rm i}\}\}.
\end{gather*}
Here, it is evident from the definition of $ \mathfrak Z_k(N)$ in \eqref{eq:Zk(N)_defn'} that \begin{align*}
\Span_{\mathbb Z}\{ G(0,\beta_1,\dots,\beta_r;1)\mid \beta_j\in\{0,1\},j\in\mathbb Z\cap[1,r);\, \beta_r\in\{{\rm i},-{\rm i}\}\}\subset\mathfrak Z_{r+1}(4).
\end{align*}Meanwhile, either a variable substitution in \eqref{eq:GPL_rec} or a special case of the H\"older relation \cite[formula~(2.13)]{Frellesvig2016} leads us to $G(0,\beta_1,\dots,\beta_{r-1},2;1)=(-1)^{r+1}G(-1,1-\beta_{r-1},\dots,1-\beta_1,1;1)$, so it follows that \begin{align*}
\Span_{\mathbb Z}\{ G(0,\beta_1,\dots,\beta_{r-1},2;1)\mid \beta_j\in\{0,1\},j\in\mathbb Z\cap[1,r) \}\subset\mathfrak Z_{r+1}(2)\subseteq\mathfrak Z_{r+1}(4).
\end{align*} This completes the verification of \eqref{eq:3kZ4}.

Before extending such a service to \eqref{eq:3kZ4'} and \eqref{eq:3kZ4''}, we note that the shuffle product of GPLs \cite[formula~(2.4)]{Frellesvig2016} implies\begin{align}
G(\alpha;t)G(\beta_1,\dots,\beta_r;t)={}&G(\alpha,\beta_1,\dots,\beta_r;t)+\sum_{j=1}^{r-1}G(\beta_{1},\dots,\beta_j,\alpha,\beta_{j+1},\dots ,\beta_r;t)\nonumber\\{}&+G(\beta_1,\dots,\beta_r,\alpha;t),\label{eq:ShGPL}
\intertext{and especially}
0=G(0;1)G(\beta_1,\dots,\beta_r;1)={}&G(0,\beta_1,\dots,\beta_r;1)+\sum_{j=1}^{r-1}G(\beta_{1},\dots,\beta_j,0,\beta_{j+1},\dots ,\beta_r;1)\nonumber\\{}&{}+G(\beta_1,\dots,\beta_r,0;1)\tag{\ref{eq:ShGPL}$'$}\label{eq:ShGPL'}
\end{align}for $ G(0;1)=\log 1=0$ [cf.\ \eqref{eq:GPL_log_bd}]. In other words, an extra factor of $ \log t=G(0;t)$ or $ \log(1-t)=G(1;t)$ in the integrand of \eqref{eq:3kZ4'} or \eqref{eq:3kZ4''} modifies the fibration structure on the right-hand side of \eqref{eq:2i-i01} into $\beta_1,\dots,\beta_{r+1}\in\{0,1, {\rm i},-{\rm i}\} $ or $\beta_1,\dots,\beta_{r+1}\in\{0,1,2\} $, and $ \beta_{r+1}\neq0$.
This accounts for the CMZV characterizations in \eqref{eq:3kZ4'} and \eqref{eq:3kZ4''}. \end{proof}

As we have just seen, the fibration structure of GPLs plays pivotal r\^oles in the CMZV characterization at level $N=4$. The same will apply to the $ N\in\{12,24\}$ situations at the next stage.

For technical reasons related to the branch cuts of polylogarithms [see \eqref{eq:Lik_jump}], we will focus on the $ \mathfrak Z_k(N),k\in\mathbb Z_{>1}$ cases in this subsection,
deferring the treatment of $ \mathfrak Z_1(N)$ to Section~\ref{subsec:logSun}.

\begin{proof}[Proof of Theorem \ref{thm:3k4k}\,(b) for $\boldsymbol{\mathfrak Z_k(12)}$ and $\boldsymbol{\mathfrak Z_{k}(24)}$,
where $\boldsymbol{k\in\mathbb Z_{>1}}$] Begin with an  equivalent formulation of the equality in \eqref{eq:3kHkZr}:
\begin{align}
\sum_{k=0}^\infty\frac{2^k\binom{3k}k}{3^{3k}}\mathsf H_k^{(r)}={}&\frac{3\sqrt{3}}{2\pi}\frac{1}{ 1-{\rm e}^{2\pi {\rm i}/3}}\left(\int_{\infty {\rm e}^{2\pi {\rm i}/3}}^0+\int_0^\infty\right)\frac{\Li_r\big( \frac{2X^{3}}{(1+X^3)^{2}} \big)}{1-\frac{2X^{3}}{(1+X^3)^{2}}}\frac{\D X}{1+X^3},\tag{\ref{eq:3kHkZr}$'$}\label{eq:3kHkZr'}
\end{align}
where the contour consists of two rays passing through the origin in the complex $X $-plane, whose angles of inclination are $ 120^\circ$ and $ 0^\circ$ respectively. Had we traded $ X$ for $1/X$ in \eqref{eq:3kHkZr} before such a conversion, we would have ended up with
\begin{align}
\sum_{k=0}^\infty\frac{2^k\binom{3k}k}{3^{3k}}\mathsf H_k^{(r)}={}&\frac{3\sqrt{3}}{2\pi}\frac{1}{ 1-{\rm e}^{4\pi {\rm i}/3}}\left(\int_{\infty {\rm e}^{2\pi {\rm i}/3}}^0+\int_0^\infty\right)\frac{\Li_r\big( \frac{2X^{3}}{(1+X^3)^{2}} \big)}{1-\frac{2X^{3}}{(1+X^3)^{2}}}\frac{X\D X}{1+X^3}\tag{\ref{eq:3kHkZr}$''$}\label{eq:3kHkZr''}
\end{align}
instead. Averaging over \eqref{eq:3kHkZr'} and \eqref{eq:3kHkZr''}, we arrive at \begin{align}
\sum_{k=0}^\infty\frac{2^k\binom{3k}k}{3^{3k}}\mathsf H_k^{(r)}=\left(\int_{\infty {\rm e}^{2\pi {\rm i}/3}}^0+\int_0^\infty\right)\Li_r\left( \frac{2z^{3}}{(1+z^3)^{2}} \right) R_{12}(z)\frac{\D z}{2\pi {\rm i}},\tag{\ref{eq:3kHkZr}$^*$}\label{eq:2kHkZr_L}
\end{align}where the rational function\begin{align*}
R_{12}(z):= \frac{3\sqrt{3}{\rm i}}{2}\left(\frac{1}{ 1-{\rm e}^{2\pi {\rm i}/3}}+\frac{z}{1-{\rm e}^{4\pi {\rm i}/3}}\right)\frac{1}{1-\frac{2z^{3}}{(1+z^3)^{2}}}\frac{1}{1+z^{3}}
\end{align*}has only six simple poles located at $ {\rm e}^{(2m-1)\pi {\rm i}/6}$, $m\in\mathbb Z\cap[1,6]$, with all the corresponding residues being in the $ \mathbb Q$-vector space $ \mathbb Q+\mathbb Q\sqrt 3$. Thanks to Cauchy's integral formula, we obtain\begin{align}
\sum_{k=0}^\infty\frac{2^k\binom{3k}k}{3^{3k}}\mathsf H_k^{(r)}=\lim_{\varepsilon\to0^+}\int_{C_\varepsilon^{[\pi/6,\pi/2]}}\Li_r\left( \frac{2z^{3}}{(1+z^3)^{2}} \right) R_{12}(z)\frac{\D z}{2\pi {\rm i}},\tag{\ref{eq:3kHkZr}$^{**}$}\label{eq:3kHkZr_Ceps}
\end{align}where the counterclockwise contour $C_\varepsilon^{[\alpha,\beta]} $ is separated from the circular arc $ {\rm e}^{{\rm i}\theta}$, $\theta\in[\alpha,\beta]$ by a distance $\varepsilon$. In other words, we will need to focus on the branch cut discontinuities of the integrand in \eqref{eq:3kHkZr_Ceps}, which satisfy [cf.\ \eqref{eq:Lik_jump} and \eqref{eq:ShGPL}]
\begin{gather*}
\lim_{\varepsilon\to0^+}
\left[\Li_r\left( \frac{2\big[(1+\varepsilon){\rm e}^{{\rm i}\theta}\big]^{3}}{\big\{1+\big[(1+\varepsilon){\rm e}^{{\rm i}\theta}\big]^{3}\big\}^{2}} \right)-\Li_r\left( \frac{2\big[(1-\varepsilon){\rm e}^{{\rm i}\theta}\big]^{3}}{\big\{1+\big[(1-\varepsilon){\rm e}^{{\rm i}\theta}\big]^{3}\big\}^{2}} \right)\right]\\
\qquad{}
=
\pi {\rm i}\frac{\theta-\frac{\pi}{3}}{\bigl\vert \theta-\frac{\pi}{3}\bigr\vert}\frac{\log^{r-1}\frac{2{\rm e}^{3{\rm i}\theta}}{(1+{\rm e}^{3{\rm i}\theta})^{2}}}{(r-1)!}
\\
\qquad{}
\in 2\pi {\rm i}\frac{\theta-\frac{\pi}{3}}{\bigl\vert \theta-\frac{\pi}{3}\bigr\vert}\Span_{\mathbb Q}\big\{G\big(\beta_1,\dots,\beta_{r-1-\ell};{\rm e}^{{\rm i}\theta}\big)\log^\ell2\mid  \\
\qquad\quad{} \beta_j^{3}\in\{0,-1\},\, j\in\mathbb Z\cap[1,r-1-\ell];\, \ell\in\mathbb Z\cap[0,r-1]  \big\}
\end{gather*}
for
$ \theta\in[\pi/6,\pi/3)\cup(\pi/3,\pi/2]$.
Bearing in mind the filtration property $ \mathfrak Z_j(N)\mathfrak
Z_{ k}(N)\subseteq \mathfrak
Z_{j+ k}(N)$ for $ j,k\in \mathbb Z_{\geq0}$ \cite[Section~1.2]{Goncharov1998}, along with the fact that $ \log 2=-\Li_1(-1)\in\mathfrak Z_1(2)\subset\mathfrak Z_1(12)$, we may turn \eqref{eq:3kHkZr_Ceps} into
\begin{gather*}
\sum_{k=0}^\infty\frac{2^k\binom{3k}k}{3^{3k}}\mathsf H_k^{(r)}\in  \Span_{\mathbb Q}\Bigg\{\lim_{\varepsilon\to0^+}\left[ \int_{(1-\varepsilon){\rm e}^{\pi {\rm i}/6}} ^{(1-\varepsilon){\rm e}^{\pi {\rm i}/2}}- \int_{(1+\varepsilon){\rm e}^{\pi {\rm i}/6}} ^{(1+\varepsilon){\rm e}^{\pi {\rm i}/2}}\right]R_{12}(z) \\
\quad{} \times \frac{\arg z-\frac{\pi}{3}}{\bigl|\arg z-\frac{\pi}{3}\bigr|}G(\beta_1,\dots,\beta_{r-1-\ell};z)\log^\ell2\D z\Bigm| \\
\qquad  {}
\beta_{j}^{12}\in\{0,1\},\, j\in\mathbb Z\cap[1,r-1-\ell];\, \ell\in\mathbb Z\cap[0,r-1] \Bigg\}\\
\quad {}\subseteq \Span_{\mathbb Q+\mathbb Q\sqrt{3}}\big\{ G(\beta_1,\dots,\beta_{r-\ell};z)\log^\ell2\mid \\
\qquad{} z^{12}=1;\, \beta_{j}^{12}\in\{0,1\},\, j\in\mathbb Z\cap[1,r-\ell];\, \ell\in\mathbb Z\cap[0,r-1]\big\} \subseteq \mathfrak Z_{r}(12)+\sqrt{3}\mathfrak Z_{r}(12)
\end{gather*}
upon integrating over the branch cut discontinuities across a circular arc, via Panzer's logarithmic regularizations \cite[Section~2.3]{Panzer2015} that cancel out $ O(\log^{r}\varepsilon)$ contributions from individual GPL recursions \eqref{eq:GPL_rec}. Here, before arriving at the set inclusion
in the last step, we have used the scaling property \cite[formula~(2.3)]{Frellesvig2016} of GPLs $ G(\mu\alpha_1,\dots,\mu\alpha_n;\mu z)=G(\alpha_1,\dots,\alpha_n;z)$ for $\mu\neq0$, $\alpha_n\neq0 $, the shuffle algebra of GPLs
(see \eqref{eq:ShGPL} and \eqref{eq:ShGPL'} above), as well as the relation $ \pi {\rm i}\in\mathfrak Z_1(N)$ for $ N\in\mathbb Z_{\geq3}$ \cite[Lemma 4.1]{Au2022a}, to confirm that \begin{align}\mathfrak
Z_{k}(N)=\Span_{\mathbb Q}\big\{G(z_1,\dots,z_k;z) \mid z_1^N,\dots,z_{k}^N\in\{0,1\},\, z^{N}=1,\, z_1\neq z\big\}\tag{\ref{eq:Zk(N)_defn}$''$}\label{eq:Zk(N)_defn''}
\end{align}for $ N\in\mathbb Z_{\geq3}$.
So far, we have verified \eqref{eq:3kHkZr} in its entirety.

To prove \eqref{eq:3kH3kZr}, we need to deform the contour in\begin{gather}
\sum_{k=0}^\infty\frac{2^k\binom{3k}k}{3^{3k}}\big(
3\mathsf H_{3 k}^{}-2\mathsf H_{2 k}^{}-\mathsf H_k^{}+\log 2-3 \log 3\big)\mathsf H_k^{(r)}\nonumber\\
\qquad=\left(\int_{\infty {\rm e}^{2\pi {\rm i}/3}}^0+\int_0^\infty\right)\left[\Li_r\left( \frac{2z^{3}}{(1+z^3)^{2}} \right) \log \frac{2z^{3}}{(1+z^3)^{2}}\right]R_{12}(z)\frac{\D z}{2\pi {\rm i}}.\tag{\ref{eq:3kH3kZr}$^*$}\label{eq:3kH3kZr_star}
\end{gather}Like \eqref{eq:chi_fib} and \eqref{eq:ups_fib}, the integrand in \eqref{eq:3kH3kZr_star} is
pole-free when the new contour wraps around the branch cuts tightly.
In addition to an integral over the loop $ C_\varepsilon^{[\pi/6,\pi/2]}$ as in the last paragraph, we also need to take into account the jump discontinuities attributed to the factor $ \log \frac{2z^{3}}{(1+z^3)^{2}}$, which are located at a ray running from $0$ to $ \infty {\rm e}^{\pi {\rm i}/3}$. Exploiting the symmetry of the integral under a reflection $ z\mapsto {\rm e}^{2\pi {\rm i}/3}/z$, we can check that \begin{gather*}
\left( \int_{\delta {\rm e}^{\pi {\rm i}/3-{\rm i}0^{+}}}^{\frac{1}{\delta}{\rm e}^{\pi {\rm i}/3-{\rm i}0^{+}}}-\int_{\delta {\rm e}^{\pi {\rm i}/3+{\rm i}0^{+}}}^{\frac{1}{\delta}{\rm e}^{\pi {\rm i}/3+{\rm i}0^{+}}}
\right)\left[\Li_r\left( \frac{2z^{3}}{(1+z^3)^{2}} \right) \log \frac{2z^{3}}{(1+z^3)^{2}}\right]R_{12}(z)\frac{\D z}{2\pi {\rm i}}\\
\qquad=2\left( \int_{\delta {\rm e}^{\pi {\rm i}/3-{\rm i}0^{+}}}^{{\rm e}^{\pi {\rm i}/3-{\rm i}0^{+}}}-\int_{\delta {\rm e}^{\pi {\rm i}/3+{\rm i}0^{+}}}^{{\rm e}^{\pi {\rm i}/3+{\rm i}0^{+}}}
\right)\left[\Li_r\left( \frac{2z^{3}}{(1+z^3)^{2}} \right) \log \frac{2z^{3}}{(1+z^3)^{2}}\right]R_{12}(z)\frac{\D z}{2\pi {\rm i}}
\end{gather*}
holds for $ \delta\in(0,1)$. Thus, the branch cut of $ \log \frac{2z^{3}}{(1+z^3)^{2}}$ eventually forms a term in the $ \mathbb Q$-vector space $ \mathfrak Z_{r+1}(12)+\sqrt{3}\mathfrak Z_{r+1}(12)$ after contour integration, according to GPL recursion and~\eqref{eq:Zk(N)_defn''}.
In all, the difference between the right-hand sides of \eqref{eq:3kHkZr}
and
\eqref{eq:3kH3kZr} is ascribed to the participation of yet another GPL shuffling
\eqref{eq:ShGPL}, which increases the weight from $r$ to $r+1$.

The identity \eqref{eq:3kH3kZ1} is a special case of the procedures to be expounded in
Section~\ref{subsec:logSun}.

What remains to be checked is \eqref{eq:3kH2kZr}. In place of \eqref{eq:2kHkZr_L}, we now have
\begin{gather}
\sum_{k=0}^\infty\frac{2^k\binom{3k}k}{3^{3k}}\mathsf H_{2k}^{(r)}\nonumber\\
\qquad{}= \left(\int_{\infty {\rm e}^{2\pi {\rm i}/3}}^0+\int_0^\infty\right)
 \left[\Li_r\left( \frac{\sqrt{2}z^{3}}{1+z^6} \right) R_{24}^{+}(z)+\Li_r\left(- \frac{\sqrt{2}z^{3}}{1+z^6} \right) R_{24}^{-}(z)\right]\frac{\D z}{2\pi {\rm i}},\tag{\ref{eq:3kH2kZr}$^*$}\label{eq:3kH2kZr_star}
\end{gather}
where the rational functions\begin{align*}
R_{24}^\pm(z):= \frac{3\sqrt{3}{\rm i}}{2}\left(\frac{1}{ 1-{\rm e}^{4\pi {\rm i}/3}}+\frac{z^{2}}{1-{\rm e}^{2\pi {\rm i}/3}}\right)\frac{1}{1\mp\frac{\sqrt{2}z^{3}}{1+z^6}}\frac{z}{1+z^{3}}
\end{align*}have only simple poles at certain $ 24$th roots of unity. Furthermore, the residues at these poles all belong to $ \mathbb Q+\mathbb Q\sqrt 3$. Subsequent branch cut analyses and GPL fibrations of \eqref{eq:3kH2kZr_star} then complete the proof of \eqref{eq:3kH2kZr}.
\end{proof}

We can now transfer the procedures above to the analysis of Sun's series involving $ \binom{4k}{2k}$.

\begin{proof}[Proof of Theorem \ref{thm:3k4k}(c) for $\boldsymbol{\mathfrak Z_k(16)}$ and $ \boldsymbol{\mathfrak Z_{k}(24)}$,
where $\boldsymbol{k\in\mathbb Z_{>1}}$]
For the verification of equation~\eqref{eq:4kZr16}, consider \begin{align}
\sum_{k=0}^\infty\frac{\binom{4k}{2k}}{2^{5k}}\mathsf H_k^{(r)}=\left(\int_{{\rm i}\infty}^0+\int_0^\infty\right)\Li_r\left( \frac{2z^{4}}{(1+z^4)^{2}} \right) R_{16}(z)\frac{\D z}{2\pi {\rm i}},\tag{\ref{eq:4kZr16}$^*$}\label{eq:4kZr16_L}
\end{align} where the rational function\begin{align*}
R_{16}(z):= 2\sqrt{2}{\rm i}\left( \frac{1}{1-{\rm i}}+\frac{z^2}{1+{\rm i}} \right)\frac{1}{1-\frac{2z^{4}}{(1+z^4)^{2}}}\frac{1}{1+z^{4}}
\end{align*}has only simple poles at certain $16$th roots of unity, with residues in the $ \mathbb Q$-vector space\begin{align*}
\mathbb Q\sqrt{1+\frac{1}{\sqrt{2}}}+
\mathbb Q\sqrt{1-\frac{1}{\sqrt{2}}}.
\end{align*}One can thus deform the contour in \eqref{eq:4kZr16_L} to a tight loop $ C_{\varepsilon}^{[\pi/8,3\pi/8]}$ wrapping around the branch cut of the polylogarithm, as done in the proof of Theorem \ref{thm:3k4k}\,(b).

The rationale behind \eqref{eq:4kZr24} is similar.\end{proof}
\subsection{Logarithmic forms of Sun's series\label{subsec:logSun}}In \cite[Section~3]{Zhou2022mkMpl}, not only have we specified critical parameters in infinite series that produce CMZV evaluations, we have also investigated the ``inverse binomial sums'' and ``binomial sums'' involving $ \binom{2k}k$ and generic parameters, such as those on the right-hand sides of \eqref{eq:4k2k_a}--\eqref{eq:4k2k_b}.

In this subsection, we examine some convergent series involving $ \binom{3k}k$ and $ \binom{4k}{2k}$, in the general setting of Theorem \ref{thm:logSun}.

\begin{proof}[Proof of Theorem \ref{thm:logSun}\,(a)]According to our experience in Section~\ref{subsec:CMZVs}, we have the following companions to \eqref{eq:3kHklog} for $ \big|1+\varXi^3\big|^2<4\big|\varXi^3\big|$:
\begin{gather}
\frac{3\sqrt{3}}{2\pi}\left(\int_{\infty {\rm e}^{2\pi {\rm i}/3}}^0+\int_0^\infty\right)\frac{\Li_1\left( \frac{z^{3}(1+\varXi^3)^{2}}{\varXi^3(1+z^3)^{2}} \right)}{1-\frac{z^{3}(1+\varXi^3)^{2}}{\varXi^3(1+z^3)^{2}}}\frac{\D z}{1+z^3}\nonumber\\
\qquad{} = \big(1-{\rm e}^{2\pi {\rm i}/3}\big)\sum_{k=0}^\infty\frac{\binom{3k}k}{3^{3k}}\left[ \frac{\big(1+\varXi^3\big)^2}{\varXi^3} \right]^k\mathsf H_k,\tag{\ref{eq:3kHklog}$ ^\circ$}\label{eq:3kHklog_0}\\
\frac{3\sqrt{3}}{2\pi}\left(\int_{\infty {\rm e}^{2\pi {\rm i}/3}}^0+\int_0^\infty\right)\frac{\Li_1\left( \frac{z^{3}(1+\varXi^3)^{2}}{\varXi^3(1+z^3)^{2}} \right)}{1-\frac{z^{3}(1+\varXi^3)^{2}}{\varXi^3(1+z^3)^{2}}}\frac{z\D z}{1+z^3}\nonumber\\
\qquad{} = \big(1-{\rm e}^{4\pi {\rm i}/3}\big)\sum_{k=0}^\infty\frac{\binom{3k}k}{3^{3k}}\left[ \frac{\big(1+\varXi^3\big)^2}{\varXi^3} \right]^k\mathsf H_k,\tag{\ref{eq:3kHklog}$ ^\dagger$}\label{eq:3kHklog_1}\\
\frac{3\sqrt{3}}{2\pi}\left(\int_{\infty {\rm e}^{2\pi {\rm i}/3}}^0+\int_0^\infty\right)\frac{\Li_1\left( \frac{z^{3}(1+\varXi^3)^{2}}{\varXi^3(1+z^3)^{2}} \right)}{1-\frac{z^{3}(1+\varXi^3)^{2}}{\varXi^3(1+z^3)^{2}}}\frac{z^{2}\D z}{1+z^3}= 0.\tag{\ref{eq:3kHklog}$ ^\ddagger$}\label{eq:3kHklog_2}
\end{gather}
If we also know that $ \arg \varXi\in(0,2\pi/3)$, then each individual integrand here has two simple poles \big(namely $ \varXi$ and $ {\rm e}^{2\pi {\rm i}/3}/\varXi$\big) in the sector $ \arg z\in(0,2\pi/3)$. We can take a certain $ \mathbb C$-linear combination of the last three displayed equations, so that the resulting integrand is free from simple poles within the same sector:
\begin{gather}
\left(\int_{\infty {\rm e}^{2\pi {\rm i}/3}}^0+\int_0^\infty\right)\frac{\Li_1\left( \frac{z^{3}(1+\varXi^3)^{2}}{\varXi^3(1+z^3)^{2}} \right)}{1-\frac{z^{3}(1+\varXi^3)^{2}}{\varXi^3(1+z^3)^{2}}}\frac{3\sqrt{3}{\rm i}(z-\varXi)\big(z-{\rm e}^{2\pi {\rm i}/3}/\varXi\big)}{1+z^3}\frac{\D z}{2\pi {\rm i}}\nonumber\\
\qquad=\left[ \frac{3}{2} \left(\frac{1}{\varXi }-\varXi \right)-\frac{{\rm i} \sqrt{3}}{2} \left(\varXi +\frac{1}{\varXi }\right)+{\rm i} \sqrt{3} \right]\sum_{k=0}^\infty\frac{\binom{3k}k}{3^{3k}}\left[ \frac{\big(1+\varXi^3\big)^2}{\varXi^3} \right]^k\mathsf H_k.\tag{\ref{eq:3kHklog}$ ^\heartsuit$}\label{eq:3kHklog_h}
\end{gather}As we specialize \eqref{eq:3kHklog_h} to $ \varXi={\rm e}^{{\rm i}\varphi}$ for $ \varphi\in(0,\pi/3)$, and deform the contour of integration, we get\begin{align}
\sum_{k=0}^\infty\frac{\binom{3k}k}{3^{3k}}\left( 4 \cos^2\frac{3\varphi}{2}\right)^k\mathsf H_k=\lim_{\varepsilon\to0^+}\int_{C_\varepsilon^{[\varphi,2\pi/3-\varphi]}}\Li_1\left( \frac{4z^{3}\cos^2\frac{3\varphi}{2}}{(1+z^3)^{2}} \right)R_\varphi^{\vartriangle}(z)\frac{\D z}{2\pi {\rm i}},\tag{\ref{eq:3kHklog}$ ^\vartriangle$}\label{eq:3kHklog_tri}
\end{align}
where \begin{align*}
R_\varphi^{\vartriangle}(z):= {}&\frac{\sqrt{3}{\rm i}\sin \left(\frac{\varphi }{2}-\frac{2 \pi }{3}\right)}{\sin^{2}\frac{3 \varphi }{2}}\left[ \frac{{\rm e}^{\frac{{\rm i} \varphi }{2}-\frac{\pi {\rm i}}{3}} \sin \varphi }{z-{\rm e}^{{\rm i} \varphi +\frac{2 \pi {\rm i}}{3}}} -\frac{{\rm e}^{\frac{{\rm i} \varphi }{2}} \sin \left(\varphi +\frac{\pi }{3}\right)}{z-{\rm e}^{{\rm i} \varphi-\frac{2 \pi {\rm i}}{3}}}\right.\\{}&\left.{}-\frac{{\rm e}^{-\frac{{\rm i} \varphi }{2}+\frac{\pi {\rm i}}{3}} \sin \varphi}{z-{\rm e}^{-{\rm i} \varphi }}+\frac{{\rm e}^{-\frac{{\rm i} \varphi }{2}} \sin \left(\varphi +\frac{\pi }{3}\right)}{z-{\rm e}^{-{\rm i} \varphi-\frac{2 \pi {\rm i}}{3}}}\right].
\end{align*}In view of the branch cut behavior,
 \begin{align*}
\lim_{\varepsilon\to0^+}\left[\Li_1\left( \frac{4\big[(1+\varepsilon){\rm e}^{{\rm i}\phi}\big]^{3}\cos^2\frac{3\varphi}{2}}{\big\{1+\big[(1+\varepsilon){\rm e}^{{\rm i}\phi}\big]^{3}\big\}^{2}} \right)-\Li_1\left( \frac{4\big[(1-\varepsilon){\rm e}^{{\rm i}\phi}\big]^{3}\cos^2\frac{3\varphi}{2}}{\big\{1+\big[(1-\varepsilon){\rm e}^{{\rm i}\phi}\big]^{3}\big\}^{2}} \right)\right]=2\pi {\rm i}\frac{\phi-\frac{\pi}{3}}{\bigl\vert \phi-\frac{\pi}{3}\bigr\vert}
\end{align*}
for $ \phi\in[\varphi,\pi/3)\cup(\pi/3,2\pi/3-\varphi]$, we can compute the right-hand side of \eqref{eq:3kHklog_tri} by integrating four rational functions that are non-singular in a neighborhood of the loop $ C_\varepsilon^{[\pi,2\pi/3-\varphi]}$, so the final form of \eqref{eq:3kHklog} [given in Theorem \ref{thm:logSun}\,(a)] emerges after a little trigonometry.

To establish \eqref{eq:3kH3klog} in full, we examine\begin{align}
\frac{3\sqrt{3}}{2\pi}\left(\int_{\infty {\rm e}^{2\pi {\rm i}/3}}^0+\int_0^\infty\right)\frac{-3\log z-2\log\left( 1+\frac{1}{z^3} \right)}{1-\frac{4z^{3}\cos^2\frac{3\varphi}{2}}{(1+z^3)^{2}}}\frac{\D z}{1+z^3}\tag{\ref{eq:3kH3klog}$ ^\circ$}\label{eq:3kH3klog_o}
\end{align} in two ways. First, by tracking the integrand literally along the contour above, we see that~\eqref{eq:3kH3klog_o} is equal to
\begin{gather*}
\frac{3\sqrt{3}}{2\pi}\big(1-{\rm e}^{2\pi {\rm i}/3}\big)\int_0^\infty \frac{\log\frac{z^{3}}{(1+z^{3})^2}}{1-\frac{4z^{3}\cos^2\frac{3\varphi}{2}}{(1+z^3)^{2}}}\frac{\D z}{1+z^3}+2\pi {\rm i}{\rm e}^{2\pi {\rm i}/3}\int_0^\infty \frac{1}{1-\frac{4z^{3}\cos^2\frac{3\varphi}{2}}{(1+z^3)^{2}}}\frac{\D z}{1+z^3}\\
\qquad{}=\big(1-{\rm e}^{2\pi {\rm i}/3}\big)\sum_{k=0}^\infty\frac{\binom{3k}k}{3^{3k}}\left( 4\cos^2\frac{3\varphi}{2} \right)^k(
3\mathsf H_{3 k}-2\mathsf H_{2 k}-\mathsf H_k-3 \log 3)\\
\qquad\quad{} +2\pi {\rm i}{\rm e}^{2\pi {\rm i}/3}\sum_{k=0}^\infty\frac{\binom{3k}k}{3^{3k}}\left( 4\cos^2\frac{3\varphi}{2} \right)^k,
\end{gather*}
where the last infinite sum evaluates to $ \sin\left( \frac{\varphi}{2}+ \frac{\pi}{3}\right)\csc\frac{3\varphi}{2}$ \cite[Remark 2.7]{Sun2022}. Second, we close the contour in the sector $ \arg z\in[0,2\pi/3]$, taking care of the branch cut that is a straight line segment joining $0 $ to ${\rm e}^{\pi {\rm i}/3}$, while picking up residues at $ z={\rm e}^{{\rm i}\varphi}$ and $ z={\rm e}^{2\pi {\rm i}/3-{\rm i}\varphi}$ in the meantime. The net result then translates into \eqref{eq:3kH3klog} as stated in Theorem \ref{thm:logSun}\,(a).

 The proof of \eqref{eq:3kH2klog} is similar to that of \eqref{eq:3kHklog}. \end{proof}

\begin{proof}[Proof of Theorem \ref{thm:logSun}\,(b)]We have \begin{align*}\begin{split}
\sum_{k=0}^\infty\frac{\binom{4k}{2k}}{2^{6k}}\big(4\cos^22\psi\big)^k\mathsf H_k={}&\frac{2\sqrt{2}}{\pi}\frac{1}{1-{\rm i}}\left(\int_{{\rm i}\infty}^0+\int_0^\infty\right)\frac{\Li_1\left( \frac{4z^{4}\cos^22\psi}{(1+z^4)^{2}} \right)}{1-\frac{4z^{4}\cos^22\psi}{(1+z^4)^{2}}}\frac{\D z}{1+z^4}\\={}&\frac{2\sqrt{2}}{\pi}\frac{1}{1+{\rm i}}\left(\int_{{\rm i}\infty}^0+\int_0^\infty\right)\frac{\Li_1\left( \frac{4z^{4}\cos^22\psi}{(1+z^4)^{2}} \right)}{1-\frac{4z^{4}\cos^22\psi}{(1+z^4)^{2}}}\frac{z^{2}\D z}{1+z^4}\end{split}
\end{align*}by analogy to \eqref{eq:3kHklog_0} and \eqref{eq:3kHklog_1}. Instead of the zero-padding identity in \eqref{eq:3kHklog_2}, we now have a trailing term\begin{gather*}
\frac{2\sqrt{2}}{\pi}\left(\int_{{\rm i}\infty}^0+\int_0^\infty\right)\frac{\Li_1\left( \frac{4z^{4}\cos^22\psi}{(1+z^4)^{2}} \right)}{1-\frac{4z^{4}\cos^22\psi}{(1+z^4)^{2}}}\frac{z\D z}{1+z^4}=\frac{2\sqrt{2}}{\pi}\int_0^\infty \frac{\Li_1\left( \frac{4x^{2}\cos^22\psi}{(1+x^2)^{2}} \right)}{1-\frac{4x^{2}\cos^22\psi}{(1+x^2)^{2}}}\frac{\D x}{1+x^2}\\
\qquad=\sqrt{2}\sum_{k=0}^\infty\frac{\binom{2k}{k}}{2^{4k}}\big(4\cos^22\psi\big)^k\mathsf H_k=\frac{2\sqrt{2}}{\sin2\psi}\log \frac{1+\sin2 \psi}{2\sin 2 \psi }.
\end{gather*} Here, in the penultimate step, we have produced binomial coefficients from an integral representation of Euler's beta function \cite[item 8.380.3]{GradshteynRyzhik}; in the last step, we have quoted Boyadzhiev's formula \cite[formula (1.1)]{Sun2022}.

According to the information in the last passage, we can construct a relevant analog of \eqref{eq:3kHklog_tri} in the following form:\begin{gather}
\sum_{k=0}^\infty\frac{\binom{4k}{2k}}{2^{6k}}\big(4\cos^22\psi\big)^k\mathsf H_k-\frac{2\sin\big(\psi+\frac{\pi}{4}\big)}{\sin2\psi}\log \frac{1+\sin2 \psi}{2\sin 2 \psi }\nonumber\\
\qquad=\lim_{\varepsilon\to0^+}\int_{C_\varepsilon^{[\psi,\pi/2-\psi]}}\Li_1\left( \frac{4z^{4}\cos^22\psi}{(1+z^4)^{2}} \right)R_\psi^{\square}(z)\frac{\D z}{2\pi {\rm i}},\tag{\ref{eq:4kHklog}$ ^\square$}\label{eq:4kHklog_sq}
\end{gather}where \begin{align*}
R_{\psi}^\square(z):= \frac{\sqrt{2}(1+{\rm i})(z-{\rm e}^{{\rm i}\psi})(z-{\rm i}{\rm e}^{-{\rm i}\psi})}{1-\frac{4z^{4}\cos^22\psi}{(1+z^4)^{2}}}\frac{1}{1+z^4}.
\end{align*} We leave detailed calculations and subsequent arguments behind \eqref{eq:4kHklog_sq} to diligent readers.\end{proof}

Until now, our working examples for CMZVs of levels $N\in\{4,8,12,16,24\} $ are restricted to constructible cyclotomy, where all the algebraic numbers of our interest can be represented through (nested) square roots. The proofs in the current subsection are by no means limited to regular polygons constructible through ruler and compass.

To illustrate the generality of our methods, we point out that the proofs of Theorems \ref{thm:3k4k}\,(b) and \ref{thm:logSun}\,(a) can be readily adapted to the following statements for all $ r\in\mathbb Z_{>0}$:
\begin{gather}
\sum_{k=0}^\infty\frac{\binom{3k}k}{3^{3k}}\mathsf H_k^{(r)}=\frac{3\sqrt{3}}{2\pi}\int_0^\infty\frac{\Li_r\bigl( \frac{X^{3}{}}{(1+X^3)^{2}} \bigr)}{1-\frac{X^{3}{}}{(1+X^3)^{2}}}\frac{\D X}{1+X^3}\in  c_{2/9}\mathfrak Z_r(18)+c_{4/9} \mathfrak Z_r(18),\label{eq:Zr18}\\\sum_{k=0}^\infty\frac{\binom{3k}k}{3^{2k}}\mathsf H_k^{(r)}=\frac{3\sqrt{3}}{2\pi}\int_0^\infty\frac{\Li_r\bigl( \frac{3X^{3}{}}{(1+X^3)^{2}} \bigr)}{1-\frac{3X^{3}{}}{(1+X^3)^{2}}}\frac{\D X}{1+X^3}\in  c_{2/9}\mathfrak Z_r(9)+c_{4/9}\mathfrak Z_r(9),\label{eq:Zr9}
\end{gather}
where $ c_\nu:= \cos(\nu\pi)$. Here, if $ r\in\{1,2\}$, then we can sharpen the right-hand side of \eqref{eq:Zr18} into $ c_{2/9}\mathfrak Z_r(9)+c_{4/9}\mathfrak Z_r(9)$ by Au's automated reduction \cite{Au2022a} of the corresponding GPL output.

Defining further $ s_\nu:= \sin(\nu\pi)$ and $ \lambdabar_\nu:= \log(2s_\nu)$, we have the following instances of \eqref{eq:Zr18} and~\eqref{eq:Zr9}: \begin{gather*}
\sum_{k=0}^\infty\frac{\binom{3k}k}{3^{3k}}\mathsf H_k=-2(3c_{2/9}+2c_{4/9})\lambdabar_{1/9}-2(2c_{2/9}+5c_{4/9})\lambdabar_{2/9}+2(c_{4/9}-c_{2/9}\mathfrak )\lambdabar_{1/3},\\
\sum_{k=0}^\infty\frac{\binom{3k}k}{3^{2k}}\mathsf H_k=-6(c_{2/9}+c_{4/9})\lambdabar_{1/9}+6c_{2/9}\lambdabar_{2/9}-2(2c_{2/9}+c_{4/9})\lambdabar_{1/3},\\
\sum_{k=0}^\infty\frac{\binom{3k}k}{3^{3k}}\mathsf H_k^{(2)}=12 (c_{2/9}+3 c_{4/9}) \Li_2(2c_{4/9})-18 (c_{2/9}+2 c_{4/9}) \Li_2\big(4 c_{4/9}^2\big)\\
\hphantom{\sum_{k=0}^\infty\frac{\binom{3k}k}{3^{3k}}\mathsf H_k^{(2)}=}{}
-18 c_{4/9} \Li_2\left(\frac{4 }{3}s_{1/9}^2\right)-6 c_{4/9} \bigl(\lambdabar_{1/9}^2-2\lambdabar_{1/9} ^{}\lambdabar_{2/9}^{}-2 \lambdabar_{2/9}^2+\lambdabar_{1/3}^2 \bigr)\\
\hphantom{\sum_{k=0}^\infty\frac{\binom{3k}k}{3^{3k}}\mathsf H_k^{(2)}=}{}
-12 c_{4/9} (2 \lambdabar_{1/9}+\lambdabar_{2/9} ) \lambdabar_{1/3}-\frac{2(c_{2/9}+3 c_{4/9})\pi^2}{9},\\
\sum_{k=0}^\infty\frac{\binom{3k}k}{3^{2k}}\mathsf H_k^{(2)}=-12 (c_{2/9}- c_{4/9}) \Li_2(2c_{4/9})+12 (c_{2/9}- c_{4/9}) \Li_2\big(4 c_{4/9}^2\big)\\
\hphantom{\sum_{k=0}^\infty\frac{\binom{3k}k}{3^{2k}}\mathsf H_k^{(2)}=}{}
+12 c_{2/9}\Li_2\left(\frac{4 }{3}s_{1/9}^2\right)+6c_{2/9} ( \lambdabar_{1/9}+\lambdabar_{1/3} )^2+\frac{2(c_{2/9}- c_{4/9})\pi^2}{9},
\end{gather*}
and the list goes on.

\subsection*{Acknowledgements}

This research was supported in part by the Applied Mathematics Program within the Department of Energy
(DOE) Office of Advanced Scientific Computing Research (ASCR) as part of~the Collaboratory on
Mathematics for Mesoscopic Modeling of Materials (CM4).

I am truly grateful to Erik Panzer and Kam Cheong Au, whose software packages \texttt{HyperInt} and \texttt{MultipleZetaValues} furnished me with many concrete computational examples that inspired the present theoretical framework. My thanks are due to Zhi-Wei Sun for his thoughtful feedback on the initial draft of the manuscript, as well as the anonymous referees for their perceptive and constructive comments.

\pdfbookmark[1]{References}{ref}
\LastPageEnding

\end{document}